\newtheorem*{thmA}{Theorem A}
\newtheorem*{Conj}{Conjecture}
\theoremstyle{plain}
\newtheorem{theorem}{Theorem}[section]
\newtheorem{lemma}[theorem]{Lemma}
\newtheorem{prop}[theorem]{Proposition}
\newtheorem{remark}{Remark}
\newsavebox{\savepar}
\begin{document}

\title[Traub's method as a dynamical system]{On the basins of attraction of
a one-dimensional family of root finding algorithms: From Newton to Traub.}

\date{}

\author{Jordi Canela}
\address{Institut Universitari de Matem\`atiques i Aplicacions de Castell\'o and Departament de Matem\`atiques, Universitat Jaume I, 12071 Castell\'o de la Plana, Spain. ORCID: https://orcid.org/0000-0001-7879-5438}
\email{canela@uji.es}
\author{Vasiliki Evdoridou}
\address{School of Mathematics and Statistics, The Open University,	Walton Hall, Milton Keynes MK76AA, UK. ORCID: https://orcid.org/0000-0002-5409-2663}
\email{vasiliki.evdoridou@open.ac.uk }
\author{Antonio Garijo}
\address{Departament d'Enginyeria Inform\`atica i Matem\`atiques,
Universitat Rovira i Virgili, 43007 Tarragona, Catalonia, Spain. ORCID:  https://orcid.org/0000-0002-1503-7514}
\email{antonio.garijo@urv.cat}
\author{Xavier Jarque}
\address{Departament de Matemàtiques i Informàtica, Universitat de Barcelona, Gran Via, 585, 08007 Barcelona, Catalonia and  Centre de Recerca Matemàtica, Edifici C, Campus Bellaterra, 08193 Bellaterra, Catalonia.
ORCID:  https://orcid.org/0000-0002-6576-9780}
\email{xavier.jarque@ub.edu}

\thanks{\textbf{Statements and declarations:} The first author was supported by the Spanish Ministry of Economy and Competitiveness through the Mar\'ia de Maeztu Programme for Units of Excellence in	R\&D (MDM-2014-0445), by BGSMath Banco de Santander Postdoctoral 2017, and by the project UJI-B2019-18 from Universitat Jaume I. The second author was supported by the London Mathematical Society, the IMUB and the EPSRC grant EP/R010560/1. The third author was supported by PID2020-118281GB-C33. The first and fourth authors were supported by  PID2020-118281GB-C32.}\

\begin{abstract}
In this paper we study the dynamics of damped Traub's methods $T_\delta$ when applied to polynomials. The family of 
damped Traub's methods consists of root finding algorithms which contain both Newton's ($\delta=0$) and Traub's method ($\delta=1$). 
Our goal is to obtain several topological properties of the basins of attraction of the roots of a polynomial $p$ under $T_1$, which are used to determine
a (universal) set of initial conditions for which convergence to all roots of $p$ can be guaranteed. We also numerically explore the global properties of the dynamical plane for $T_\delta$ to better understand the connection between Newton's method and Traub's method.   \newline

{\it Keywords: Holomorphic dynamics, Julia and Fatou sets, basins of attraction, root finding algorithms, simple connectivity, unboundedness.}

{\it MSC: 30D05, 37F10, 37F46}
\end{abstract}

\maketitle

%
%
%
%
%
%
\section{Introduction}

Dynamical Systems is a powerful tool which allows us to obtain a deep understanding of the global behaviour of  {\it root-finding} algorithms, that is, iterative methods capable to  determine numerically the solutions of the (non-linear) equation $f(x)=0$. In most cases, the order of  convergence of those methods near the zeros of $f$ is well known, but the behaviour and effectiveness when initial conditions are chosen on the whole space is in general unclear. This is, in fact, the main problem to tackle when studying a root-finding algorithm from the dynamical systems point of view, particularly, when we do not know a priori {\it where the roots are}, when there are many roots, or when we do not know how they are distributed. 

The numerical exploration of the solutions of the equation $f(x)=0$ has always been   a central problem in many areas of applied mathematics, from biology to engineering, since most mathematical models require a thorough knowledge of the solutions of certain equations. Once we are certain that no algebraic manipulation of the equation will allow to explicitly find out the  solutions, one can try to build numerical methods which will approximate the solutions with arbitrary precision. Perhaps the most well-known and universal method is {\it  Newton's method} (see for instance \cite{Newton-Blanchard}) inspired by the linearisation of the equation $f(x)=0$ but also other methods like the one under consideration here, {\it Traub's method} (see \cite{Traub_Book}), have been shown to be efficient  when converging.

Roughly speaking, all these iterative methods give efficient ways to find the solutions of $f(x)=0$, at least once you have a good approximation of them. However, there is a significant amount  of uncertainty  when the initial conditions are freely chosen, when, as we mentioned above, there is no {\it natural} candidate for the solution, or the number of solutions of $f(x)=0$ is large. It is  in this context where dynamical systems can play a central role since a precise description of the dynamical plane could be a cornerstone input. More precisely, the topological properties of the (immediate) attracting basins of the points that correspond to solutions of $f(x)=0$ are the key tool to elaborate 
algorithms which will calculate all solutions {\it at once} in an efficient way.

A paradigmatic example of this method  can be found in the seminal paper  by Hubbard, Schleicher and Sutherland, \cite{HowToNewton}, where the authors first prove theoretical results on the topology of the mentioned invariant sets for Newton's method and then they use this information to create efficient algorithms to find {\it all} solutions, even in the case that the degree of $p$ is extremely high. Let us summarize the  main result in \cite{HowToNewton}. First, we introduce the basic notation from holomorphic dynamics.

Let $R:\hat{\mathbb C} \to \hat{\mathbb C}$ be a rational map. A point $z=\xi$ is fixed if $R(\xi)=\xi$ (resp.\ periodic of period $p$ if $R^p(\xi)=\xi$  for some $p\geq 1$). The multiplier of $\xi$ is $\lambda=R'(\xi)$ (resp.\ $\lambda=(R^p)'(\xi)$). The fixed or periodic point $\xi$ is \textit{attracting} if $|\lambda|<1$ (\textit{superattracting} if $\lambda=0$),  \textit{repelling} if $|\lambda|>1$, and \textit{indifferent} if $|\lambda|=1$. If $\xi$ is attracting, we define the {\it basin of attraction of} $\xi$ as
$$
\mathcal A_R(\xi):=\mathcal A(\xi)=\{z\in \hat{\mathbb C} \ | \ R^n(z)\to \xi,\ n\to \infty  \}.
$$
In what follows we  omit  the dependence with respect to the rational map under consideration, unless it is mandatory. It is easy to see that $\mathcal A(\xi)$ is an open set containing $\xi$. We denote by $\mathcal A^{\star}(\xi)$ the connected component of $\mathcal A(\xi)$ containing $\xi$.

Recall that if  $p$ is a polynomial of degree $d\geq 2$, the rational map defined as
\begin{equation}\label{eq:newton}
N_p(z):= z- \frac{p(z)}{p'(z)}
\end{equation}
is known as Newton's map (or method) applied to $p$. The map $N_p$ is the {\it universal} root-finding algorithm and it satisfies a key global dynamical property: the point $z=\alpha$ is a root of $p$ if and only if it is an attracting fixed point of $N_p$. In fact, if $z=\alpha$ is a simple root of $p$, then $N_p^{\prime}(\alpha)=0$ ($z=\alpha$ is superattracting) and  the local order of convergence of $N_p$ near $z=\alpha$ is quadratic.  We remark that $z=\infty$ is always a repelling fixed point of $N_p$.

We now turn to the dynamics of the map $N_p$. It is clear that for all initial conditions $z_0\in \mathbb C$ such that
$$
z_0\in \bigcup_{j=1}^d \mathcal A\left(\alpha_j\right)
$$ 
the sequences $\{N_p^n(z_0)\}_{n\geq 0}$ will converge to one of the roots of $p$. Newton's map might have other {\it stable} periodic components not related to the roots of $p$ (see \cite[Proposition 4]{Smale} and \cite{FamiliesRational}). Those open domains on the dynamical plane determine a positive measure set of {\it bad} initial conditions that we want to avoid when finding all roots of $p$. But, of course, a priori there is no control on the topology and distribution of these domains in the plane and so it seems difficult to choose the initial conditions carefully enough so that they are not in the positive measure bad set (if any). The authors in \cite{HowToNewton} constructed a universal and explicit set of initial conditions $\mathcal S_d$ (only depending on the degree of the polynomial) such that for any given $z=\alpha_j,\ j=1,\ldots, d,$ at least one of the corresponding sequences $\{N_p^n(z_0),\ z_0 \in \mathcal S_d\}_{n\geq 0}$,  converges to $\alpha_j$. The existence of the set $\mathcal S_d$ is guaranteed by the following key properties of the immediate basins of attraction for the fixed points of $N_p$.

\begin{theorem}[]\label{theorem:HSS}
Let $p$ be a polynomial of degree $d\geq 2$. Assume that $p(\alpha)=0$ and let $N_p$ be the corresponding Newton's map. Then $\mathcal A^{\star}(\alpha)$ is a simply connected unbounded set.
\end{theorem}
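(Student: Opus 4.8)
The plan is to realise $U:=\mathcal A^{\star}(\alpha)$ as a proper dynamical domain and to read off its topology from the Riemann--Hurwitz formula, treating simple connectivity and unboundedness separately. First I would record the structural facts about $N_p$ as a rational map of degree $d$: its finite fixed points are exactly the roots of $p$, all of them attracting (with multiplier $0$ at a simple root and $1-1/m$ at a root of multiplicity $m$), while $z=\infty$ is its unique repelling fixed point. Since $U$ is a fixed Fatou component, $N_p\colon U\to U$ is a proper holomorphic map; denote its degree by $k$. I would first rule out $k=1$: if $p$ has at least two distinct roots then $U$ omits at least three points of $\hat{\mathbb C}$ and is hyperbolic, so a degree-one proper self-map would be a hyperbolic isometry and could not carry the attracting fixed point $\alpha$. (The degenerate case of a single root of multiplicity $d$ makes $N_p$ affine, so $U=\mathbb C$ and the statement is immediate.) Hence $k\ge 2$.

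For simple connectivity I would apply Riemann--Hurwitz to $N_p\colon U\to U$, obtaining $\delta_U=(k-1)\chi(U)$, where $\delta_U\ge 0$ counts the critical points of $N_p$ in $U$ with multiplicity. By Fatou's theorem the immediate basin of an attracting fixed point always contains a critical point (for a simple root one sees directly that $\alpha$ itself is critical, as $N_p'(\alpha)=0$), so $\delta_U\ge 1$. Since $U$ is a connected proper open subset of the sphere, $\chi(U)\le 1$; combined with $\delta_U\ge 1$ and $k-1\ge 1$, this forces $\chi(U)=\delta_U/(k-1)$ to be a strictly positive integer that is at most $1$. The only possibility is $\chi(U)=1$, i.e.\ $U$ is simply connected.

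For unboundedness I would argue by contradiction, assuming $U$ bounded. Then $\overline U\subset\mathbb C$ is compact and forward invariant, and it carries no pole of $N_p$ (a pole is sent to $\infty\notin\overline U$), so $N_p$ is holomorphic near $\overline U$. Using the simple connectivity just proved, uniformise $U$ by a Riemann map $\phi\colon\mathbb D\to U$ with $\phi(0)=\alpha$; then $B:=\phi^{-1}\circ N_p\circ\phi$ is a finite Blaschke product of degree $k\ge 2$ fixing $0$. Its restriction to $\partial\mathbb D$ is a degree-$k$ covering of the circle, whose lift increases by $k-1\ge 1$ over one period and therefore has a fixed point $\zeta\in\partial\mathbb D$. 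Transferring $\zeta$ through the boundary behaviour of $\phi$ produces a point of $\partial U\subset\mathbb C$ fixed by $N_p$. This is the desired contradiction: every finite fixed point of $N_p$ is a root of $p$, which lies in the interior of its own immediate basin and hence never on $\partial U$. Therefore $U$ must be unbounded, with $\infty\in\partial U$.

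The main obstacle is the boundary transfer in the last step: making sense of ``$\phi(\zeta)$'' when $\partial U$ need not be locally connected. I would handle this through prime-end theory, where $\zeta$ determines a fixed prime end of $U$ whose repelling (or parabolic) nature under $B$ forces its impression to collapse to a single fixed point of $N_p$ in $\partial U$; alternatively, one can invoke the explicit control $N_p(z)=(1-1/d)z+O(1)$ near $\infty$ to show directly that the accesses of $U$ to the boundary accumulate only at the repelling fixed point $\infty$. Everything else is bookkeeping with Riemann--Hurwitz and the fixed-point classification.
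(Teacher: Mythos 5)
First, a framing remark: the paper does not actually prove Theorem \ref{theorem:HSS}; it attributes it to Przytycki \cite{Prz} and to Shishikura \cite{ConnectivityJulia}, so your proposal has to be measured against those classical proofs. Your unboundedness half is essentially Przytycki's argument (also reproduced in \cite{HowToNewton}): conjugate $N_p|_U$ by a Riemann map to a Blaschke product of degree $k\ge 2$, find a fixed point on $\partial\mathbb D$, transfer it to a fixed point of $N_p$ on $\partial U$, and note that every finite fixed point of $N_p$ is an attracting root, hence cannot lie on the Julia set. The boundary-transfer step you flag is a genuine but well-understood difficulty (repelling or parabolic boundary fixed points of inner functions, invariant curves landing, a snail-lemma type argument), and your prime-end sketch is the standard way to repair it. That half of the proposal is sound as a proof sketch.

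The genuine gap is in the simple-connectivity step. The Riemann--Hurwitz identity $\chi(U)=k\chi(U)-\delta_U$ is only informative when $\chi(U)$ is finite, i.e.\ when $U$ is finitely connected; if $U$ is infinitely connected then $\chi(U)=-\infty$ and the identity reads $-\infty=-\infty$, so nothing forces $\chi(U)=1$. You cannot treat $\chi(U)=\delta_U/(k-1)$ as a positive integer without first excluding infinite connectivity, and that exclusion is precisely the hard content of the theorem. That the argument cannot be correct as written is shown by a standard example: for $f(z)=z^2+c$ with $c$ outside the Mandelbrot set, the immediate basin $U$ of the superattracting fixed point at $\infty$ satisfies all of your hypotheses ($f\colon U\to U$ is proper of degree $2$ and $\delta_U=2\ge 1$), yet $U$ is infinitely connected; your computation would yield $\chi(U)=2$, violating your own constraint $\chi(U)\le 1$. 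The point is that your simple-connectivity argument uses nothing specific to Newton's method, while the conclusion is false for general rational maps; the Newton-specific input (no finite weakly repelling fixed points) is exactly what Przytycki exploits, and Shishikura's theorem --- a rational map with exactly one weakly repelling fixed point has connected Julia set --- is the definitive form of this. Note also that the gap propagates: your unboundedness argument needs the Riemann map of $U$, hence needs simple connectivity established first, so the proposal as a whole is incomplete.
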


The above  result was proven by Przytycki  \cite{Prz}. Later on, Shishikura \cite{ConnectivityJulia}, generalized the simple connectivity of $\mathcal A^{\star}(\alpha)$ (and, in fact, of any Fatou component)  by proving that the Julia set of any rational map having one, and only one, weakly repelling fixed point is connected. For a general overview of this topic there are many excellent references, see for instance \cite{MilnorBook, BeardonBook,CarlesonGamelinBook}. For  concrete results on Newton's method as a dynamical system see \cite{HowToNewton, ConnectivityMero, CombinatoricsNewton, TanLei2, LocalConnectivityNewton}.

This paper is a step forward in order to extend the above theorem to a  class of root-finding algorithms which includes Newton's method as well as Traub's method. But the underlying motivation is to be able to construct  an $\mathcal S_d$ like-set for these root--finding algorithms which we can use to find all  the roots of $p$ at once. More precisely, we consider the \textit{damped Traub's family} of  root-finding algorithms
\begin{equation}\label{eq:traub_delta}
T_{p,\delta}(z)=N_p(z)-\delta\ \frac{p(N_p(z))}{p'(z)},
\end{equation}
which depends on a complex parameter $\delta$. To our knowledge this family, with $\delta\in \mathbb R$, was first considered in \cite{DampedTraubQuadratic, DampedTraubCubic}. In these papers the authors analysed the existence of attracting cycles other than the roots of the polynomial $p$, when $p$ is a quadratic or cubic polynomial.
Notice that $\delta=0$ corresponds to Newton's map. Traub (see \cite{Traub_Book}) proposed the root-finding algorithm, nowadays known as Traub's  method, which corresponds to $\delta=1$. For simple roots of $p$ the (local) order of convergence of Traub's method is cubic but it is worth to be noticed that each Traub's iteration is {\it one and a half} iterations for Newton's method.
  More precisely, two iterates of Newton's method require evaluating both $p$ and $p'$ at $z$ and $w:=N_p(z)$, while one iterate of Traub's method requires evaluating $p$ at $z$ and $w$ and $p'$ only at $z$.  
In other words, there are clear local advantages of Traub's method (cubic instead of quadratic) but it has also some drawbacks compared to Newton's method as it requires some extra evaluations and it might have attracting fixed points which do not correspond to any root of $p$. See Figure \ref{fig:One_Attractor}.

In any event, proving  an equivalent result to Theorem \ref{theorem:HSS} for $T_{p,1}$ will provide the tools for constructing the $\mathcal S_d$ like-set of {\it good} initial conditions with the advantage of the cubic, instead of quadratic, convergence of Traub's method. Nonetheless, according to some rigorous arguments plus the numerical experiments we have done we state the following conjecture.
 
\begin{Conj}
Let $p$ be a polynomial of degree $d\geq 2$. Assume that $p(\alpha)=0$ and let $T_{p,1}$ be the corresponding Traub's map. Then $\mathcal A^{\star}(\alpha)$ is a simply connected unbounded set.  
\end{Conj}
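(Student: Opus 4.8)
The plan is to imitate the proof of Theorem \ref{theorem:HSS}, keeping the point $\infty$ as the organizing centre and isolating the one place where the Newton argument genuinely breaks down. The first step, purely computational, is to locate and classify $\infty$. Writing $p(z)=a_dz^d+\cdots$ gives $N_p(z)=\tfrac{d-1}{d}z+O(1)$, and hence
\[
T_{p,1}(z)=\frac{d-1}{d}\Bigl(1-\frac{(d-1)^{d-1}}{d^{d}}\Bigr)z+O(1)=:c_d\,z+O(1),\qquad z\to\infty.
\]
Since $0<(d-1)^{d-1}/d^{d}<1$ for every $d\ge 2$, we get $0<c_d<\tfrac{d-1}{d}<1$, so in the chart $w=1/z$ the map reads $w\mapsto c_d^{-1}w+\cdots$ with $c_d^{-1}>1$: thus $\infty$ is a repelling, hence weakly repelling, fixed point of $T_{p,1}$, exactly as for $N_p$. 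This is the structural fact that makes the whole strategy viable.

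For unboundedness I would transplant the ``accesses to infinity'' analysis of \cite{HowToNewton}. Since $\infty$ is repelling it belongs to the Julia set and so lies on the common boundary of the basins, and near $\infty$ the complement of the union of basins decomposes into finitely many channels whose number and cyclic order are dictated by the poles of $T_{p,1}$ and by the linear expansion $c_d$. The goal is to show that the invariant access attached to each root $\alpha_j$ for $N_p$ persists for $T_{p,1}$, so that every immediate basin $\mathcal A^{\star}(\alpha_j)$ contains at least one access and therefore has $\infty$ on its boundary. The extra bookkeeping compared with $N_p$ comes only from $\deg T_{p,1}>\deg N_p$, which enlarges the set of poles and of preimages of $\infty$; this is technical but, I expect, manageable.

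Simple connectivity is the delicate point, and here one must resist copying the Newton proof. A standard Riemann--Hurwitz count shows that a \emph{finitely} connected invariant immediate basin that contains a critical point (as $\mathcal A^{\star}(\alpha)$ does, by Fatou's theorem, the restriction $T_{p,1}|_{\mathcal A^{\star}(\alpha)}$ being proper of degree $\ge 2$) is automatically simply connected; the real content is therefore to rule out \emph{infinite} connectivity. For $N_p$ this is handled globally by Shishikura \cite{ConnectivityJulia}: $N_p$ has a single weakly repelling fixed point, so its Julia set is connected and every Fatou component is simply connected. That argument is unavailable for $T_{p,1}$, because $T_{p,1}$ has strictly larger degree and hence extraneous fixed points, namely the solutions of $p(z)+p(N_p(z))=0$ with $p(z)\neq 0$; these are generically repelling, so $T_{p,1}$ typically has several weakly repelling fixed points and its Julia set may well be disconnected. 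The statement must accordingly be established for the root basins alone.

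The route I would take is the local counterpart of Shishikura's theorem: a multiply connected invariant Fatou component must enclose a weakly repelling fixed point in each bounded complementary component (this is the mechanism behind \cite{ConnectivityJulia} and its refinements for Newton-type maps). If $\mathcal A^{\star}(\alpha)$ were not simply connected it would thus surround a weakly repelling fixed point of $T_{p,1}$; since $\infty$ lies on $\partial\mathcal A^{\star}(\alpha)$ by the unboundedness step, the enclosed point would have to be one of the extraneous repelling fixed points. The whole problem therefore reduces to showing that each bounded complementary component of $\mathcal A^{\star}(\alpha)$ contains no weakly repelling fixed point of $T_{p,1}$. This is exactly the step I expect to be the main obstacle: locating the extraneous solutions of $p(z)+p(N_p(z))=0$ relative to the root basins for \emph{all} degrees $d$ and all polynomials $p$ appears to demand either a global normal-families or surgery argument, or sharp estimates going well beyond the degree-by-degree and numerical evidence currently available, which is why the result remains a conjecture.
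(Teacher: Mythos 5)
Your preliminary computations are correct and consistent with the paper: for $\delta=1$ the multiplier at infinity is $T_{p,1}'(\infty)=\frac{d^{d+1}}{(d-1)\left[d^d-(d-1)^{d-1}\right]}>1$, matching Lemma \ref{lem:infinity}(a), so $\infty$ is indeed repelling; the finite fixed points of $T_{p,1}$ are indeed the solutions of $p(z)+p(N_p(z))=0$, and the presence of extraneous ones is exactly why Przytycki's and Shishikura's proofs of Theorem \ref{theorem:HSS} break down, as the paper itself notes; and your Riemann--Hurwitz count correctly reduces simple connectivity to excluding \emph{infinite} connectivity. But be aware of what you are comparing against: the statement you address is the paper's open Conjecture. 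The authors do not prove it; they prove it only for $d=2$ and for $p_{n,\beta}(z)=z^n-\beta$ (Theorem A), by methods entirely different from yours (conjugacy to a Blaschke product and parameter-space arguments in the quadratic case; rotational symmetry, real-line estimates and Descartes' rule for $z^n-\beta$). Your proposal is a general research program, and both of its pillars are themselves open problems, so it is not a proof.

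Concretely, the gaps are these. First, the unboundedness step: you describe the persistence of the invariant access to $\infty$ as ``technical but manageable'' bookkeeping. It is not. The passage from $N_p$ to $T_{p,\delta}$ is a \emph{singular} perturbation supported on the Julia set (each zero of $p'$ becomes a preimage of $\infty$ of multiplicity $d+1$, by Lemma \ref{lemma:critics}(b)), so perturbative control of accesses is only available for $|\delta|$ small, as discussed in Section \ref{sec:numconcl}; and naive persistence of each access is actually false: Figure \ref{fig:numericdinam} shows, for $p(z)=z(z-1)(z-i)$, an access to $\infty$ of $\mathcal A^{\star}(0)$ under $N_p$ being closed off as $\delta$ increases to $1$. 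What one must show is that at least one access survives for every root and every $p$, and that is precisely the unboundedness half of the Conjecture, not a technicality. Second, the simple connectivity step: the ``local counterpart of Shishikura's theorem'' you invoke (every bounded complementary component of a multiply connected invariant Fatou component contains a weakly repelling fixed point) is not stated in \cite{ConnectivityJulia} in the form you use it and would itself need a surgery argument; and even granting it, the remaining step --- that no weakly repelling fixed point of $T_{p,1}$ lies in a bounded complementary component of $\mathcal A^{\star}(\alpha)$ --- is exactly the open obstacle, as you yourself concede. Note also that your parenthetical claim that the extraneous fixed points are ``generically repelling'' is unsupported and misleading: Figure \ref{fig:One_Attractor} exhibits a cubic for which $T_{p,1}$ has an \emph{attracting} extraneous fixed point, which is the very phenomenon that makes the location of these points relative to the root basins so delicate. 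In short: correct diagnosis of the obstruction, correct local computations, but the two key steps are restatements of the Conjecture rather than proofs of it.
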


Neither Przytycki's  nor Shishikura's  proof of Theorem \ref{theorem:HSS} apply to the family $T_{p,\delta}$ except for $\delta=0$. Indeed, their proofs rely on the fact that for Newton's method there are no finite fixed points other than the roots.  In this paper we prove the conjecture in the case where some additional hypothesis holds.

\begin{figure}[hbt!]
	\centering
	\begin{tikzpicture}
		\begin{axis}[width=215pt, axis equal image, scale only axis,  enlargelimits=false, axis on top]
			\addplot graphics[xmin=-0.75,xmax=0.75,ymin=-0.75,ymax=0.75] {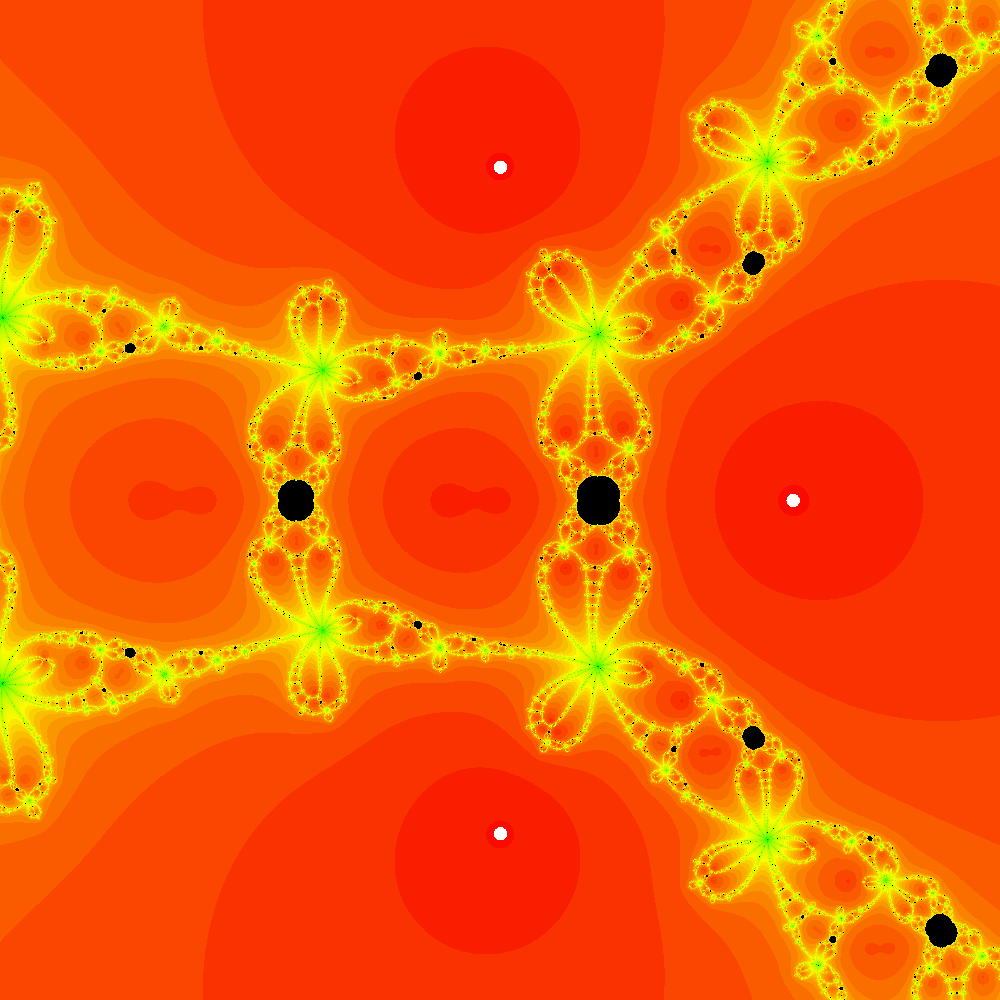};
		\end{axis}
	\end{tikzpicture}
	\put(-42,96){{ $0.439$}}
	\put(-108,174){{\small  $0.5i$}}
		\put(-115,34){{\small  $-0.5i$}}
	\put(-76.5,106){\tiny{\color{white} $\bullet$}}
		\put(-85,106){\scriptsize  {$\zeta$}}
		   \hglue 0.3truecm \includegraphics[width=215pt]{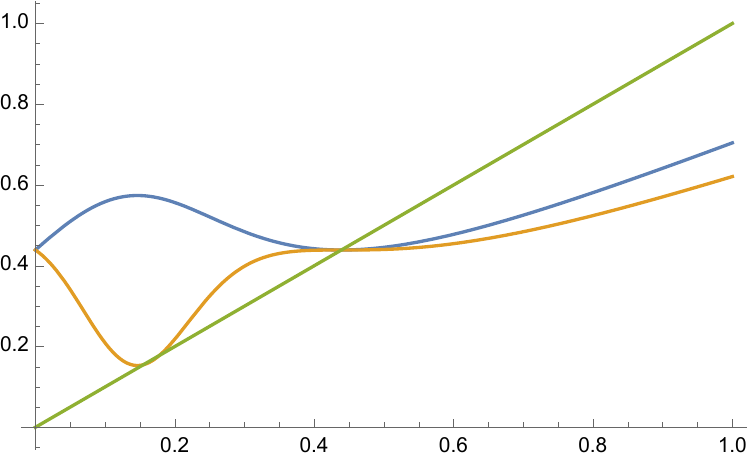}
	\put(-180,81){\small  $N_p(x)$} 
	\put(-40,64){\small  $T_p(x)$} 
	\put(-42,120){\small  $y=x$} 
	\caption{In the left picture we illustrate the dynamical plane of Traub's method applied to the cubic polynomial $p(z)=(z^2+0.25)(z-0.439)$. In this case, $T_p$ has an attracting fixed point located at $\zeta  \approx 0.155$.  The basins of attraction of the three fixed points associated with the zeros of $p$ are shown in red. The basin of attraction of $\zeta$ is shown in black. In the right picture we illustrate the Newton's and Traub's maps restricted to $\mathbb R$. We can observe that $N_p|_{\mathbb{R}}$ has one attracting fixed point while $T_p|_{\mathbb{R}}$ has two attracting fixed points. } 
	\label{fig:One_Attractor}
\end{figure}

\begin{thmA}
Let $p$ be a polynomial of degree $d\geq 2$. Assume that $p$ satisfies one of the following conditions:
\begin{itemize}
\item[(a)] $d=2$, or
\item[(b)]  it can be written in the form $p_{n,\beta}(z):=z^n-\beta$ for some $n\geq 3$ and $\beta\in\mathbb C $. 
\end{itemize} 
Suppose that $p(\alpha)=0$ and consider $T_{p,\delta}$ with $\delta  \in [0,1]$. Then  $\mathcal A_{\delta}^{\star}(\alpha)$ is a simply connected, unbounded set.  
\end{thmA}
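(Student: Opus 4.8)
The plan is to reduce to the model family $p(z)=z^n-1$ and then combine a symmetry-driven confinement of the immediate basin to a sector with a Riemann--Hurwitz count. Since root-finding algorithms are equivariant under affine conjugacy, an affine change of variable turns any quadratic into $z^2-1$ and any $p_{n,\beta}$ into $z^n-1$ (rescaling $z$ by $\beta^{1/n}$); as this conjugacy preserves $\delta$, it suffices to treat $p(z)=z^n-1$ with $n\geq 2$. The case $\delta=0$ is Theorem \ref{theorem:HSS}, so I assume $\delta\in(0,1]$. A direct differentiation shows $T:=T_{p,\delta}$ fixes each root $\alpha$ with $T'(\alpha)=0$, so $\alpha$ is superattracting and, crucially, a critical point of $T$; write $U:=\mathcal A_\delta^\star(\alpha)$. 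Two symmetries will be used throughout: the rotational symmetry $T(\omega z)=\omega\, T(z)$ for every $n$-th root of unity $\omega$, and $T(\bar z)=\overline{T(z)}$ (real coefficients, $\delta$ real). By rotational symmetry it suffices to treat $\alpha=1$.

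First I would confine $U$ to a sector. The two symmetries combine into the anti-holomorphic involution $\sigma(z)=e^{2\pi i/n}\bar z$, which commutes with $T$ and whose fixed-point set is the line through the origin at angle $\pi/n$; hence the rays $\ell_\pm$ at angles $\pm\pi/n$ (the bisectors between $1$ and its neighbouring roots) are forward invariant. No point of $\ell_\pm$ can lie in $\mathcal A(1)$, since its orbit would remain on the invariant ray yet would have to converge to $1\notin \ell_\pm$; and $0\notin U$ because $T(0)=\infty$, a repelling fixed point. As $U$ is connected and contains $1$, it is therefore contained in the open sector $\Sigma=\{z\neq0:\ |\arg z|<\pi/n\}$ (for $n=2$ this is simply the right half-plane). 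The map $\pi(z)=z^n$ restricts to a conformal bijection of $\Sigma$ onto the slit plane $\mathbb C\setminus(-\infty,0]$ and carries no critical point ($\pi'=nz^{n-1}$ vanishes only at $0\notin\Sigma$); via the semiconjugacy $\pi\circ T=\tilde T\circ\pi$ it identifies $(T,U)$ with $(\tilde T,\pi(U))$, so all connectivity and boundedness questions for $U$ transfer to the single collapsed root $w=1$ of $\tilde T$.

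For unboundedness I would use the invariant real ray. On $(1,\infty)$ one has $p(x)>0$ and $p(N_p(x))>0$, so $T(x)=x-\big(p(x)+\delta\,p(N_p(x))\big)/p'(x)<x$, while for $\delta\in(0,1]$ a factorisation of the numerator of $T(x)-1$ (which has $x=1$ as a double zero) gives $T(x)>1$. Thus $T$ maps $(1,\infty)$ into itself with strictly decreasing orbits, and since the only fixed point of $T$ in $[1,\infty)$ is $1$, every such orbit converges to $1$. Hence $(1,\infty)\subset\mathcal A(1)$; being connected and accumulating at $1\in U$, it lies in $U$, which is therefore unbounded.

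Finally, simple connectivity. Because $U$ is the immediate basin of the attracting fixed point $\alpha$, the restriction $T|_U\colon U\to U$ is a proper map of some finite degree $k\geq 2$ (the local degree at $\alpha$ is already $\geq 2$). If $U$ is finitely connected, Riemann--Hurwitz gives $\chi(U)=k\,\chi(U)-\delta_U$, hence $\chi(U)=\delta_U/(k-1)$, where $\delta_U\ge 1$ is the branching contributed by the critical points in $U$ --- it is at least $1$ precisely because $\alpha\in U$ is a critical point. Since $\chi(U)=2-\operatorname{conn}(U)\le 1$ is an integer and $\delta_U/(k-1)>0$, this forces $\chi(U)=1$, i.e. $U$ is simply connected. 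So the whole problem collapses to showing that $U$ is \emph{finitely} connected, and this is the step I expect to be the main obstacle: one must rule out infinitely many bounded complementary components (``holes'') of $U$. Here the hypotheses do the work. For $d=2$ the map $T$ is an explicit degree-four rational map whose six critical points are $\pm1$ (the roots), $0$ (which maps to $\infty$), and a symmetric pair $\pm c$ on the real axis with $c\in(\zeta,1)\subset U$, where $\zeta$ is the (repelling) extra fixed point; thus every critical orbit converges to $\pm1$ or to $\infty$, which leaves no room for wandering or recurrent behaviour and lets one show directly that $\hat{\mathbb C}\setminus U$ is connected. For $p_{n,\beta}$ the sector confinement together with the conformal model $\pi$ reduces the count of critical points and holes inside $\Sigma$ to the same kind of controlled configuration around the single root $w=1$ of $\tilde T$. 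Carrying out this control of the critical orbits and of the boundary of $U$ --- and thereby establishing finite connectivity --- is the technical heart of the argument.
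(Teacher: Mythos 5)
Your reduction to $p_n(z)=z^n-1$, the sector confinement via the anti-holomorphic involution $\sigma(z)=e^{2\pi i/n}\bar z$ (a nice observation, not in the paper), and the Riemann--Hurwitz count are all sound, and your unboundedness skeleton (proving $1<T(x)<x$ on $(1,\infty)$, hence $[1,\infty)\subset\mathcal A^\star(1)$) is exactly the paper's route. But there is a genuine gap at the central point: simple connectivity is never established. Riemann--Hurwitz only shows that a \emph{finitely} connected immediate basin is simply connected; by the standard dichotomy, immediate basins are either simply connected or \emph{infinitely} connected, so the entire problem is to rule out infinite connectivity --- and you explicitly leave that step open. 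The heuristic you offer in its place (``every critical orbit converges to $\pm1$ or to $\infty$, which leaves no room for wandering or recurrent behaviour and lets one show directly that $\hat{\mathbb C}\setminus U$ is connected'') is not a valid inference: hyperbolic rational maps with fully controlled critical orbits can have infinitely connected immediate basins, as in the McMullen family $z^2+\lambda/z^3$. Note also that for $d=2$ and $\delta\in(0,1]$ the point $\infty$ is \emph{repelling} (Lemma \ref{lem:infinity}) and $T_{p,\delta}$ has additional repelling fixed points, which is precisely why the Przytycki and Shishikura arguments fail for this family; your sketch does not supply a substitute.

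The paper closes this gap by two arguments you would need. For case (b): if one immediate basin were multiply connected, the maximum modulus principle forces it to contain a curve surrounding a pole; since $z=0$ is the \emph{only} pole of $T_{p_n,\delta}$, the rotational symmetry would make all $n$ pairwise disjoint immediate basins surround $0$, and hence intersect --- a contradiction (Proposition \ref{prop:simplyconN}); no control of critical orbits or of connectivity at infinity is needed. For case (a), the paper conjugates $T_{p_2,\delta}$ by a M\"obius map to the family $G_\delta$, shows that at $\delta=1$ the immediate basins of $0$ and $\infty$ contain no critical points besides the superattracting fixed points themselves, so the B\"ottcher coordinates extend to the whole immediate basins (giving simple connectivity), and then transports this to every $\delta\in(0,1]$ by $\mathcal J$-stability inside the hyperbolic component $\mathcal K\supset(0,1]$. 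Two secondary gaps: your inequality $T(x)>1$ for $x>1$ is asserted via ``a factorisation'', but knowing that $x=1$ is a double zero of the numerator of $T(x)-1$ does not exclude further zeros in $(1,\infty)$; the paper rules these out via Descartes' rule of signs together with the computation of $S_{\delta,n}(1)$, $S_{\delta,n}'(1)$ and $S_{\delta,n}''(1)$. Finally, your affine normalisation silently excludes the degenerate cases permitted by the theorem, namely $p=(z-\alpha)^2$ in (a) and $\beta=0$ in (b), where $T_{p,\delta}$ collapses to a degree-one map and must be treated separately, as the paper does.
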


We strongly believe, and it is indicated by our numerical experiments, that working with the $\delta$-family instead of proving the conjecture for $T_{p,1}$ as an isolated map, might have important advantages. See Section \ref{sec:numconcl} for details.

In Section \ref{sec:properties} we show the main properties of the family of rational maps $T_{p,\delta}$. Section \ref{sec:quadratic}  is devoted to prove Theorem A  (a). Moreover, we also study in this section the $\delta-$parameter plane of $T_{p,\delta}$ and we prove that $\delta=0$ and $\delta=1$ belong to the same hyperbolic component of the parameter plane. In Section \ref{sec:pol} we prove Theorem A (b). Finally, in Section  \ref{sec:numconcl}  we discuss some numerical evidence supporting the conjecture as well as illustrating that some of the arguments used in the previous sections will not work in general; so other approach will be needed.

\section*{Acknowledgments}
We are grateful to the anonymous referee for all his/her suggestions which have clearly improved the previous version of this paper. We also thank the Institute of Mathematics at Universitat de Barcelona (IMUB) for the hospitality during the visit of the second author, when this project started.


%
%
%
%
%
%

\section{Local dynamics of the family $T_{p,\delta}$} \label{sec:properties}

In this section we prove fundamental dynamical properties for maps in $T_{p,\delta}$.  These properties will allow us to prove Theorem A. First, the local behaviour of $T_{p,\delta}$ near the roots of $p$ is described (Lemma \ref{multiple root}). Afterwards, we study the dynamical behaviour of the maps $T_{p,\delta}$ at $z=\infty$ by proving under which conditions 
infinity is an attracting fixed point (Lemma \ref{lem:infinity}). Finally, we study the critical points of $T_{p,\delta}$ (Lemma  \ref{lemma:critics}),  which play a key role in the local and global dynamics. 

 We can assume that  $p(z)$ is a monic polynomial since easily we have that  $N_{\lambda p } = N_{p}$ and $T_{\lambda p,\delta}=T_{p,\delta}$ for all $\lambda \in \mathbb C$.   Throughout this section we will use either of the following equivalent expressions for the polynomial $p$:
\begin{equation}\label{eq:p}
p(z)= z^d+ \dots +a_1z+a_0= \prod_{i=1} ^d (z- \alpha_i).
\end{equation}

\noindent Notice, in particular, that we are not assuming that the zeros of $p$ are simple. In Lemma \ref{multiple root} we describe the local behaviour of the map $T_{\lambda p,\delta}$ near the roots of $p$. This study depends on whether the root is simple or multiple. This local study was previously done in \cite[Theorem 1]{DampedTraubQuadratic} (for the case of simple roots) and \cite[Theorem 1]{DampedTraubCubic} (for the case of multiple roots). Even though Lemma~\ref{multiple root} follows directly from \cite[Theorem 1]{DampedTraubQuadratic} and \cite[Theorem 1]{DampedTraubCubic}, we add the proof for the sake of completeness. The main difference between these results is that \cite[Theorem 1]{DampedTraubQuadratic} and \cite[Theorem 1]{DampedTraubCubic} are obtained by studying the corresponding error terms of the numerical methods while in Lemma~\ref{multiple root}  we use the multiplier of the roots as fixed points of $T_{p,\delta}$.

\begin{lemma} \label{multiple root}
Let $p$ be a monic polynomial of degree $d \geq 2$ which has a root $\alpha$ of multiplicity $1\leq k  \leq d,$ i.e. $p(z)= (z- \alpha)^k q(z),$ where $q$ is a polynomial of degree $d-k$ with $q(\alpha) \neq 0$. Then, $\alpha$ is a fixed point of $T_{p,\delta}$. Moreover the following statements hold. 
\begin{itemize}
\item  If $k=1$ (simple roots) then $T_{p,\delta}$ has a superattracting fixed point at $\alpha$ for all $\delta \in \mathbb C$. 
\item  If $k \geq 2$ (multiple roots) then $T_{p,\delta}$ has an attracting fixed point at $\alpha$ if and only if  
\[
\delta\in \mathbb D_k:=\left\{    z \in  \mathbb C \, ; \, \left |z - \frac{ k^k }{(k-1)^{k-1}} \right | < \frac{k^{k+1}}{(k-1)^k } \right \}.
 \]
\end{itemize}

\end{lemma}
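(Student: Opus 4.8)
The plan is to verify that $\alpha$ is a fixed point and then to compute the multiplier $\lambda = T_{p,\delta}'(\alpha)$ explicitly; both the dichotomy in $k$ and the disk $\mathbb{D}_k$ will fall out of a single local expansion of $T_{p,\delta}$ around $\alpha$. First I would record the local normal form of $p$ near the root. Writing $u = z-\alpha$ and $p(z)=(z-\alpha)^k q(z)$ with $q(\alpha)\neq 0$, one computes $p'(z)=(z-\alpha)^{k-1}\bigl[kq(z)+(z-\alpha)q'(z)\bigr]$, so that $p/p' = \tfrac{u\,q(z)}{kq(z)+u\,q'(z)}$ extends holomorphically across $\alpha$ with value $0$. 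Hence $N_p(\alpha)=\alpha$ (even at a multiple root the apparent singularity is removable), and since $p(N_p(\alpha))=p(\alpha)=0$ the last term of $T_{p,\delta}$ also vanishes, giving $T_{p,\delta}(\alpha)=\alpha$. This settles the fixed-point claim for every $\delta$ and every $k$.

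For the multiplier I would expand to first order in $u$. From $p/p' = \tfrac{u}{k}+O(u^2)$ one obtains $N_p'(\alpha)=(k-1)/k$ (and $N_p'(\alpha)=0$ when $k=1$), so that $v:=N_p(z)-\alpha = \tfrac{k-1}{k}\,u + O(u^2)$. The delicate term is $p(N_p(z))/p'(z)$, which is of the indeterminate form $0/0$ at a multiple root; this is the main obstacle, and I would resolve it by leading-order bookkeeping rather than by differentiating the quotient directly. Since $p(N_p(z)) = q(\alpha)\,v^k\bigl(1+O(v)\bigr)$ and $p'(z) = k\,q(\alpha)\,u^{k-1}\bigl(1+O(u)\bigr)$, the factor $q(\alpha)$ cancels and
$$
\frac{p(N_p(z))}{p'(z)} = \frac{1}{k}\left(\frac{k-1}{k}\right)^k u\,\bigl(1+O(u)\bigr) = \frac{(k-1)^k}{k^{k+1}}\,u + O(u^2),
$$
so the subleading data (the value of $q'(\alpha)$, etc.) do not enter the linear coefficient. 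The key point to argue carefully here is precisely that this $0/0$ term produces a genuine first-order term with the stated coefficient, depending only on $k$.

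Combining the two pieces gives
$$
T_{p,\delta}(z)-\alpha = \left(\frac{k-1}{k} - \delta\,\frac{(k-1)^k}{k^{k+1}}\right) u + O(u^2), \qquad \lambda = T_{p,\delta}'(\alpha) = \frac{k-1}{k} - \delta\,\frac{(k-1)^k}{k^{k+1}}.
$$
When $k=1$ both $v$ and $p(N_p(z))/p'(z)$ are $O(u^2)$, so $\lambda=0$ and $\alpha$ is superattracting for all $\delta$, as claimed. When $k\geq 2$ I would factor
$$
\lambda = \frac{(k-1)^k}{k^{k+1}}\left(\frac{k^k}{(k-1)^{k-1}} - \delta\right),
$$
and impose $|\lambda|<1$, which rearranges exactly to $\bigl|\delta - k^k/(k-1)^{k-1}\bigr| < k^{k+1}/(k-1)^k$, that is $\delta\in\mathbb{D}_k$. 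Everything outside the treatment of the indeterminate term is a routine Taylor computation.
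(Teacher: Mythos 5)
Your proposal is correct and follows essentially the same route as the paper: both establish the lemma by computing the multiplier $T_{p,\delta}'(\alpha)=\frac{k-1}{k}-\delta\,\frac{(k-1)^k}{k^{k+1}}$ at the fixed point $\alpha$ and then translating $|T_{p,\delta}'(\alpha)|<1$ into the disk condition defining $\mathbb D_k$. The only difference is bookkeeping: you resolve the $0/0$ term $p(N_p(z))/p'(z)$ by a local expansion in $u=z-\alpha$, while the paper factors $T_{p,\delta}(z)=z-B_1(z)(z-\alpha)-\delta B_2(z)(z-\alpha)q(A)$ exactly and evaluates the derivative at $\alpha$; both hinge on the same cancellation of orders $k$ and $k-1$ and yield the identical multiplier.
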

\begin{proof}
To simplify notation we  omit  the dependence on $p$; that is, $T_\delta=T_{p,\delta}$.  Let $\alpha$  be a root of $p$ and assume $p(z)= (z- \alpha)^k q(z),$ where $q$ is a polynomial of degree $d-k$ with $q(\alpha) \neq 0$. We have 
\begin{equation}\label{eq:new_tdelta}
T_{\delta}(z)= z-\frac{p(z)}{p^{\prime}(z)}-\delta\;\frac{\left(N(z)-\alpha\right)^kq\left(N(z)\right)}{p'(z)}.
\end{equation}
Some computations show that
\begin{equation} \label{eq:new_tdelta_2}
\begin{split}
&q\left(N(z)\right)=q\left(\frac{kzq(z)+(z-\alpha)(zq^{\prime}(z)-q(z))}{kq(z)+(z-\alpha)q^{\prime}(z)}\right):=q(A) \\
&\left(N(z)-\alpha\right)^k=(z-\alpha)^k\left(\frac{(k-1)q(z)+(z-\alpha)q^{\prime}(z)}{kq(z)+(z-\alpha)q^{\prime}(z)}\right)^{k}.
\end{split}
\end{equation}
From \eqref{eq:new_tdelta} and \eqref{eq:new_tdelta_2} and some further computations we get
\begin{equation} \label{eq:new_tdelta_3}
T_\delta(z)=z-B_1(z)(z-\alpha)-\delta B_2(z)(z-\alpha)q(A), \\
\end{equation}
where 
\begin{equation}\label{eq:b1b2} 
\begin{split}
&B_1(z)=\frac{q(z)}{kq(z)+(z-\alpha)q^{\prime}(z)} \\
&B_2(z)=\frac{\left((k-1)q(z)+(z-\alpha)q^{\prime}(z)\right)^k}{\left(kq(z)+(z-\alpha)q^{\prime}(z)\right)^{k+1}}.
\end{split}
\end{equation}
Now, trivially, $T_\delta(\alpha)=\alpha$. Taking the derivative
in \eqref{eq:new_tdelta_3} we have
$$
T^{\prime}_\delta(z)=1-B_1(z)-(z-\alpha)B_1^{\prime}(z)-\delta\left(B_2(z)-(z-\alpha)B_2^{\prime}(z)\right).
$$
Evaluating at $z=\alpha$ and using the expressions of $B_1(z)$ and $B_2(z)$ in \eqref{eq:b1b2}, we get
 $$
 T_{\delta}'(\alpha)= \frac{k-1}{k} - \delta \left(\frac{k-1}{k}\right)^k \frac{1}{k}.
 $$

Thus, if $k=1$ then $\alpha$ is a superattracting fixed point of $T_{\delta}$ since $T_{\delta}'(\alpha)=0$. If $k\geq 2$ then $\alpha$ is an attracting fixed point  of $T_{\delta}$ if and only if $ | T_{\delta}'(\alpha)|<1$ and the result follows since
\[
 | T_{\delta}'(\alpha)|<1 \iff \left |  \frac{k-1}{k} - \delta \left(\frac{k-1}{k}\right)^k \frac{1}{k}  \right | < 1 \iff   \left |\delta - \frac{ k^k }{(k-1)^{k-1}} \right | < \frac{k^{k+1}}{(k-1)^k }.
\]

 \end{proof}

From the lemma above we conclude that if $\alpha$ is a simple root of $p$, the local order of convergence of any map in $T_{p,\delta}$ is at least quadratic (a well known result for Newton's maps corresponding to $\delta=0$).  Later in this section we prove that for  $\delta=1$ (Traub's method) the local order of convergence is at least cubic (see  Lemma \ref{lemma:critics}~(a)); so near simple roots of $p$ Traub's method is more efficient than Newton's method. It is well known that for $\delta=0$, the point  $z=\infty$ is always a repelling fixed point with multiplier $N_p'(\infty)=d/(d-1)$. However the dynamical behaviour of $z=\infty$ for maps in $T_{p,\delta}$ depends on the particular choice of $\delta$. The next lemma shows that the point $z=\infty$ is fixed except for the degeneracy parameter $\delta=d^d/(d-1)^{d-1}$.  In particular,  in the case that  infinity is an attracting fixed point we obtain  open sets of initial conditions for which $T_{p,\delta}$ does not converge to the roots of $p$. Such parameters are avoided when looking for roots of a degree $d$ polynomial. 
%
%

\begin{lemma} \label{lem:infinity}
Let $p$ be a monic polynomial of degree $d$, and assume $\delta\in \mathbb C$, $\delta\neq d^d/(d-1)^{d-1}$. Then, $z=\infty$ is a fixed point of  $T_{p,\delta}$ and satisfies:
\begin{enumerate}
	\item[(a)] it is repelling if 
	\[
	 0 < \left|\delta - \frac{ d^d }{(d-1)^{d-1}} \right | < \frac{d^{d+1}}{(d-1)^d },
	\]
		\item[(b)] it is attracting if 
	\[
	\left|\delta - \frac{ d^d }{(d-1)^{d-1}} \right | > \frac{d^{d+1}}{(d-1)^d },
	\]
		\item[(c)] and it is indifferent otherwise. 
\end{enumerate}

 \end{lemma}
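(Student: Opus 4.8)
The plan is to analyse the dynamics at $z=\infty$ in the standard way, by conjugating with the coordinate change $w=1/z$ and computing the multiplier of the resulting map at $w=0$. Concretely, I would first establish an asymptotic expansion of the form $T_{p,\delta}(z)=c\,z+O(1)$ as $z\to\infty$, for an explicit constant $c=c(\delta,d)$. Once this is available, $z=\infty$ is a fixed point precisely when $c\neq 0$, and the conjugated map $w\mapsto 1/T_{p,\delta}(1/w)$ has multiplier $\lambda=1/c$ at $w=0$ (equivalently $\lambda=\lim_{z\to\infty}z/T_{p,\delta}(z)$). The whole statement then reduces to identifying $c$ and reading off the three regimes $|\lambda|>1$, $|\lambda|<1$, $|\lambda|=1$.

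The core computation is the leading-order expansion. Writing $p(z)=z^d+a_{d-1}z^{d-1}+\cdots$, a long division gives $N_p(z)=\frac{d-1}{d}z+O(1)$, so in particular $N_p(z)\to\infty$ with $N_p(z)\sim\frac{d-1}{d}z$. Substituting this into $p$ yields $p(N_p(z))\sim\left(\frac{d-1}{d}\right)^d z^d$, while $p'(z)\sim d\,z^{d-1}$; hence $\frac{p(N_p(z))}{p'(z)}=\frac{(d-1)^d}{d^{d+1}}z+O(1)$. Combining this with the expansion of $N_p$ gives
$$T_{p,\delta}(z)=\left(\frac{d-1}{d}-\delta\,\frac{(d-1)^d}{d^{d+1}}\right)z+O(1),$$
so that $c=\frac{d-1}{d}-\delta\frac{(d-1)^d}{d^{d+1}}$. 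Factoring out $\frac{(d-1)^d}{d^{d+1}}$ rewrites this as $c=\frac{(d-1)^d}{d^{d+1}}\left(\frac{d^d}{(d-1)^{d-1}}-\delta\right)$, which vanishes exactly when $\delta=\frac{d^d}{(d-1)^{d-1}}$, the degeneracy excluded in the hypothesis; for all other $\delta$ we indeed have $c\neq 0$, so $z=\infty$ is fixed.

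It then remains to translate $|\lambda|=|1/c|$ into the three conditions. From the factored form,
$$|\lambda|=\frac{1}{|c|}=\frac{d^{d+1}/(d-1)^d}{\left|\delta-\frac{d^d}{(d-1)^{d-1}}\right|},$$
and so $|\lambda|>1$, $|\lambda|<1$, $|\lambda|=1$ correspond respectively to $\left|\delta-\frac{d^d}{(d-1)^{d-1}}\right|$ being less than, greater than, or equal to $\frac{d^{d+1}}{(d-1)^d}$. In the repelling case the chain of equivalences also records $0<\left|\delta-\frac{d^d}{(d-1)^{d-1}}\right|$, which is automatic once $c\neq 0$. This yields (a), (b) and (c).

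The main obstacle I anticipate is purely bookkeeping: making the asymptotic expansions rigorous for a general monic $p$ rather than the model case $p(z)=z^d$, and verifying that the $O(1)$ (and lower) terms genuinely do not affect the multiplier at infinity. The cleanest way to remove any doubt is to note that $\lambda=\lim_{z\to\infty}z/T_{p,\delta}(z)=1/c$ follows directly from $T_{p,\delta}(z)=cz+O(1)$, so only the leading coefficient $c$ enters. Alternatively, one can clear denominators to present $T_{p,\delta}$ explicitly as a quotient of polynomials and compare the degrees of numerator and denominator, which confirms that $z=\infty$ is a simple fixed point with the stated multiplier whenever $c\neq 0$.
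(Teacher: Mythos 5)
Your proposal is correct, and its core is the same as the paper's: both pass to the chart $w=1/z$ and identify the multiplier at infinity as the reciprocal of the leading coefficient $c=\frac{d-1}{d}-\delta\,\frac{(d-1)^d}{d^{d+1}}$, after which (a)--(c) are exactly the three regimes for $|1/c|$. Your $c$ agrees with the paper's multiplier, since $T_\delta'(\infty)=\frac{d^{d+1}}{(d-1)\left[d^d-\delta(d-1)^{d-1}\right]}=1/c$, and the degenerate parameter $\delta=d^d/(d-1)^{d-1}$ appears in both treatments as the vanishing of $c$. The one genuine difference is how $c$ is obtained: you use leading-order asymptotics of $N_p$ and of $p(N_p)/p'$, which is rigorous because $T_{p,\delta}$ is rational and hence admits a Laurent expansion at infinity, so $T_{p,\delta}(z)=cz+O(1)$ with $c\neq 0$ already pins down that $\infty$ is fixed with multiplier $1/c$; the paper instead derives the exact rational form \eqref{eq:rat_T_lambda} by factoring $p(N_p(z))=[p(z)]^2r(z)/[p'(z)]^d$ over the roots of $p$, and then compares degrees and leading coefficients of numerator and denominator. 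Your route is shorter and entirely sufficient for this lemma; the paper's heavier computation buys extra information that is reused later, namely the global degree $d^2$ of $T_{p,\delta}$ and the multiplicity of the zeros of $p'$ as poles, both of which enter the critical point count in Lemma~\ref{lemma:critics}.
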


\begin{proof}
As before we simplify notation by erasing the dependence on $p$.  Since $T_{\delta}$ is a rational map we rewrite the damped Traub's map as a quotient of two polynomials.  We claim that 
\[
T_{\delta}(z)= z - \frac{p(z)}{p'(z)} - \delta \frac{[p(z)]^2 r(z)}{[p'(z)]^{d+1}},
\]
\noindent where $r(z)$ is a polynomial of degree $d^2-2d$. To see the claim we notice that from \eqref{eq:p} we have
$$
p'(z)=\displaystyle \sum_{i=1}^d p_i(z), \quad {\rm where} \quad p_i(z)=\prod_{k=1\,,\,k\neq i}^d (z-\alpha_k).
$$ 
Also,
\begin{eqnarray*}
p(N_p(z)) &=&  \prod_{i=1}^d \left(z- \frac{p(z)}{p'(z)}- \alpha_i \right) =\frac{1}{[p'(z)]^d} \prod_{i=1}^d [(z-\alpha_i)p'(z)-p(z)] \nonumber \\
&=& \frac{1}{[p'(z)]^d} \prod_{i=1}^d [(z-\alpha_i) \sum_{k=1}^d p_k(z)-p(z)] =\frac{1}{[p'(z)]^d} \prod_{i=1}^d (z-\alpha_i) \sum_{k=1, k \neq i}^d p_k(z) , \nonumber  \\
&=& \frac{1}{[p'(z)]^d} \prod_{i=1}^d (z-\alpha_i)^2 \sum _{k=1\,,k \neq i}^dp_{k,i}(z) =  \frac{[p(z)]^2}{[p'(z)]^d} \prod_{i=1}^d \sum _{k=1\, , k \neq i}^dp_{k,i}(z), \nonumber \\
\end{eqnarray*}
\noindent where $p_{k,i}(z)=\prod_{j=1\,,\,j\neq i,k }^d (z-\alpha_j)$ and  $r(z)=\displaystyle \prod_{i=1}^d \sum _{k=1\, , k \neq i}^dp_{k,i}(z)$ is a polynomial of degree $d^2 -2d$. Therefore,  we obtain

\begin{equation}\label{eq:rat_T_lambda}
T_{\delta}(z)= \frac{z [p'(z)]^{d+1}-p(z)[p'(z)]^d-\delta [p(z)]^2r(z)}{[p'(z)]^{d+1}}.
\end{equation}

We now compute the leading coefficients of the numerator and the denominator in \eqref{eq:rat_T_lambda}. Since the polynomial $p$ has degree $d$ with leading coefficient equal  to 1, $p'$ has degree $d-1$ with leading coefficient equal to $d$ and the polynomial $r(z)$ has degree $d^2-2d$ with leading coefficient equal to $(d-1)^d$, some computations show that  
\begin{equation}\label{eq:rat_T_lambda_2}
T_{\delta}(z)= \frac{ [ d^{d+1} - d^d - \delta (d-1)^d ] z^{d^2} + \cdots }{ d^{d+1} z^{d^2-1} + \cdots }\ .
\end{equation}
Consequently, if 
$$
\delta \neq \frac{d^{d+1}-d^d}{(d-1)^d} = \frac{d^d}{(d-1)^{d-1}}
$$ 
the numerator of the rational map $T_{\delta}$ has degree $d^2$ while the denominator has degree $d^2-1$. We conclude that $T_{\delta}(\infty)=\infty$. On the other hand, if $\delta=d^d/(d-1)^{d-1}$, the degree of the denominator of $T_{\delta}$ is $d^2-1$ and the degree of the numerator is at most  $d^2-1$, so $T_{\delta}(\infty)\neq\infty$.

Assume $T_{\delta}(\infty)=\infty$. To compute the multiplier at $z=\infty$ we use the auxiliary map $G_{\delta}(z)= 1/T_{\delta}(1/z)$. Straightforward computations from \eqref{eq:rat_T_lambda_2} give 
\[
G_{\delta}(z) =\frac{1}{T_{\delta}(1/z)}= z \ \frac{d^{d+1} + \cdots}{[ d^{d+1} - d^d - \delta (d-1)^d ]+ \cdots },
\]
and so  
\[
T'_{\delta}(\infty)=G'_{\delta}(0)=\frac{d^{d+1}}{\left(d-1\right)\left[d^d-\delta\left(d-1\right)^{d-1}\right]}.
\]

We know that $z=\infty$ is a repelling fixed point of $T_{\delta}$ if and only if $|T'_{\delta}(\infty)|>1$. Hence,  statement (a) follows from 
\[
|T'_{\delta}(\infty)|>1 \iff \left |  \frac{d^{d+1}}{\left(d-1\right)\left[d^d-\delta\left(d-1\right)^{d-1}\right]} \right | > 1 \iff   \left |\delta - \frac{ d^d }{(d-1)^{d-1}} \right | < \frac{d^{d+1}}{(d-1)^d }.\]
Statements (b) and (c) follow similarly by imposing $|T'_{\delta}(\infty)|<1$ and $|T'_{\delta}(\infty)|=1$, respectively.
 \end{proof}

We finish this section studying the \textit{critical points} of the maps $T_{p,\delta}$. The importance of these points comes from the fact that they are related to the stable behaviour of the dynamics of any rational map. In order to explain this relation, let us first recall the basic concepts we need here about rational dynamics.
Given a rational map $R: \hat{\mathbb C} \to \hat{\mathbb C}$,  we consider the dynamical system given by the iterates of $R$. The Riemann sphere splits into two completely $R-$invariant subsets: the {\it Fatou set} $\mathcal F(R)$, which is defined to be the set of points $z \in \hat{\mathbb C}$ where the family $\{ R^n\, , \, n \geq 0 \}$ is normal in some neighbourhood of $z$, and its complement, the {\it Julia set} $\mathcal J(R)=\hat{\mathbb C} \setminus \mathcal F(R)$. The Fatou set is open and therefore $\mathcal J(R)$ is closed. Moreover, if the degree of the rational map $f$ is greater than or equal to 2, then the Julia set $\mathcal J(R)$ is not empty and is the closure of the set of repelling periodic points of $R$.

The connected components of $\mathcal F(R)$, called {\it Fatou components}, are mapped under $R$ among themselves. Sullivan (\cite{NoWanderingTheorem}) proved that any Fatou component of a rational map is either periodic or preperiodic. The Classification Theorem concludes that there are only four types of periodic Fatou components one of which consists of the connected components of the \textit{basin of attraction} of an attracting cycle. It is known that any attracting cycle of Fatou components contains  at least  one critical point. For a background on the dynamics of rational maps we refer to \cite{MilnorBook, BeardonBook,CarlesonGamelinBook}.

Accordingly, the key tool to understand the dynamical plane of $T_{p,\delta}$ is to control the dynamical behaviour of the critical orbits, i.e.\ orbits of critical points. For instance, if we assume that all roots of $p$ are simple we have 
\begin{equation}\label{eq:critics_Newton}
T^{\prime}_{p,0}(z)=N'_p(z)=\frac{p(z)p''(z)}{\left(p^{\prime}(z)\right)^2},
\end{equation}
and the critical points of Newton's method are the zeros of $p$ (which are superattracting fixed points of $T_{p,0}$) and  the zeros of $p^{\prime\prime}$. These latter critical points are usually called {\it free critical points} since they are not linked to any prescribed dynamics. Notice that the poles of $T_{p,0}^{\prime}$, that is, the zeros of $p^{\prime}$, are not critical points since it follows by   \eqref{eq:rat_T_lambda}     the map is one-to-one on a sufficiently small neighbourhood of each pole.

When considering $\delta\ne 0$ the degree of the  map changes  drastically  compared to $\delta=0$. Indeed, the degree increases from $d$ to $d^2$, and so the number of critical points increases from $2d-2$ ($d$ zeros of $p$ and $d-2$ zeros of $p''$) to $2d^2-2$. In some way, $T_{p,\delta}$, with $\delta\ne 0$, can be considered a {\it singular perturbation} of $T_{p,0}$, although the local perturbation takes place on the Julia set while in most cases where this theory applies the perturbation takes place on the Fatou set which allows a more rigid control of the global dynamics after perturbation (see for instance \cite{Trichotomy,Can1}). The following proposition gives a precise description of the critical points of $T_{p,\delta}$.

\begin{lemma}\label{lemma:critics}
Let $p$ be a monic polynomial of degree $d$ with all its roots being simple, and assume $\delta\in \mathbb C\setminus \{0\}$. Then, the critical points of $T_{p,\delta}$ can be classified as follows. 

\begin{enumerate}
\item[(a)] The zeros of $p$.  If $p(\alpha)=0$ then $\alpha$   is a critical point  with multiplicity 1 for $\delta \neq 1$ and multiplicity 2 for $\delta=1$ (Traub's method). 
\item[(b)] The zeros of $p'$ (which are poles of $T_{p,\delta}$).  If $p'(\beta)=0$, then $\beta$ is a critical point with multiplicity $d$.
\item[(c)] The zeros of $p''$. If $p''(\gamma)=0$ then $\gamma$ is a critical point and its multiplicity depends on higher derivates of $p$ at $\gamma$.
\item[(d)] Critical points that do not belong to any of the above cases. There are as many as 
\begin{itemize}
\item[(i)]  $d(d-1)$ if $\delta \neq 1$ and $\delta \neq d^d/(d-1)^{d-1}$.
\item[(ii)]  $d(d-2)$ for $\delta=1$ (Traub's method). 
\item[(iii)] $d(d-1)-2$ for $\delta = d^d/(d-1)^{d-1}$.
\end{itemize}
\end{enumerate}
\end{lemma}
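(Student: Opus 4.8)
The plan is to base the whole classification on a single differentiation of $T_\delta := T_{p,\delta}$ written in the compositional form $T_\delta = N - \delta\,(p\circ N)/p'$, where $N:=N_p$, and then to close the count with the Riemann--Hurwitz formula. Using $N' = pp''/(p')^2$ together with the quotient and chain rules, I expect the derivative to factor as
\[
T_\delta'(z)=\frac{p''(z)}{p'(z)^2}\,H(z),\qquad H(z):=p(z)\Bigl(1-\delta\,\frac{p'(N(z))}{p'(z)}\Bigr)+\delta\,p(N(z)).
\]
This one identity organizes everything: the explicit factor $p''/(p')^2$ accounts for the critical points arising from the zeros of $p''$ and for the poles at the zeros of $p'$, while the remaining critical points are the zeros of $H$, among which the roots of $p$ appear. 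For part (c), I would argue that if $p''(\gamma)=0$ while $p'(\gamma)\neq0$ and $H(\gamma)\neq0$, then $T_\delta'$ vanishes at $\gamma$ to exactly the order to which $p''$ vanishes there; since that order is governed by the higher derivatives of $p$ at $\gamma$, the stated dependence follows.

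For part (a) I would Taylor expand $H$ about a simple root $\alpha$ of $p$. Writing $p(z)=(z-\alpha)q(z)$ with $q(\alpha)\neq0$ and using the quadratic tangency $N(z)-\alpha=O((z-\alpha)^2)$ supplied by Lemma \ref{multiple root}, one gets $p(N(z))=O((z-\alpha)^2)$, while the bracket equals $1-\delta$ at $z=\alpha$ because $N(\alpha)=\alpha$. Hence
\[
H(z)=(1-\delta)\,q(\alpha)\,(z-\alpha)+O\bigl((z-\alpha)^2\bigr),
\]
so that $\operatorname{ord}_\alpha H=1$ when $\delta\neq1$ and $\operatorname{ord}_\alpha H\geq2$ when $\delta=1$. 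Verifying that the quadratic coefficient does not vanish for $\delta=1$ (this is precisely the classical statement that Traub's method converges cubically, and no faster, at a simple root) yields critical multiplicity $1$ for $\delta\neq1$ and $2$ for $\delta=1$.

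Part (b) is cleanest from the rational form $T_\delta=\bigl(z[p']^{d+1}-p[p']^d-\delta p^2 r\bigr)/[p']^{d+1}$ of Lemma \ref{lem:infinity}. At a zero $\beta$ of $p'$, which satisfies $p(\beta)\neq0$ because the roots of $p$ are simple, the numerator reduces to $-\delta\,p(\beta)^2 r(\beta)$; matching the leading behaviour of $p(N(z))$ as $z\to\beta$ (where $N(z)\to\infty$) against $p(N)=p^2r/(p')^d$ gives $r(\beta)=(-1)^d p(\beta)^{d-2}\neq0$, so the numerator is nonzero there. Consequently $\beta$ is a pole of $T_\delta$ of order exactly $d+1$, its local degree is $d+1$, and its critical multiplicity is $d$. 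This computation also shows the numerator never vanishes at a zero of the denominator, so $P$ and $Q=[p']^{d+1}$ are coprime and no spurious degree reduction occurs.

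Finally, part (d) is bookkeeping via Riemann--Hurwitz. From the leading coefficient $d^{d+1}-d^d-\delta(d-1)^d$ of the numerator in \eqref{eq:rat_T_lambda_2}, the map has degree $d^2$ except when $\delta=d^d/(d-1)^{d-1}$, where this coefficient vanishes and the degree drops to $d^2-1$ (and $z=\infty$ is no longer fixed, contributing no ramification). The total ramification is thus $2d^2-2$ in cases (i) and (ii) and $2d^2-4$ in case (iii). Subtracting the contributions already pinned down — $d$ from the roots of $p$, or $2d$ when $\delta=1$; $d(d-1)$ from the $d-1$ poles; and $d-2$ from the zeros of $p''$ — leaves exactly $d(d-1)$, $d(d-2)$ and $d(d-1)-2$ free critical points, respectively. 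The main obstacle I anticipate is not the arithmetic but the genericity hidden in the local multiplicities: the factorization assigns $\operatorname{ord}_z T_\delta'$ correctly only when the zero sets of $p$, $p'$ and $p''$ do not collide (for instance a root $\alpha$ with $p''(\alpha)=0$, or a multiple zero of $p'$). I would handle this either by assuming $p$ generic or by checking directly that such coincidences merely redistribute ramification among cases (a)--(c) while leaving the total $2\deg-2$, and hence the free counts in (d), unchanged.
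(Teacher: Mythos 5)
Your proposal is correct and follows essentially the same route as the paper's own proof: the identical factorization $T_{p,\delta}'=\frac{p''}{(p')^2}H$ (the paper's equation \eqref{eq:critical_traub}), the same local analyses at the zeros of $p$, $p'$ and $p''$, and the same count for part (d) via the degree $d^2$ (dropping to $d^2-1$ at $\delta=d^d/(d-1)^{d-1}$) and the $2\deg-2$ total for critical points. If anything, your verification that $r(\beta)\neq 0$ at the poles and your closing remark on possible collisions among the zero sets of $p$, $p'$, $p''$ are more careful than the paper, which passes over these genericity points silently.
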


\begin{proof}
The critical points of $T_{p,\delta}$ are given by the solutions of $T_{p,\delta}'(z)=0$ and eventually the poles of $T_{p,\delta}$. Using \eqref{eq:newton}, \eqref{eq:traub_delta} and \eqref{eq:critics_Newton} it is easy to see that 
\begin{equation}\label{eq:critical_traub}
T_{p,\delta}'(z)= \frac{p''(z)}{\left(p'(z)\right)^2} \left[ p(z)- \delta\;\frac{p'(N_p(z)) p(z)}{p'(z)}+ \delta p(N_p(z)) \right]. 
\end{equation}

We start by proving statement (a). Let $\alpha\in \mathbb C$ such that $p(\alpha)=0$ (notice that by assumption $p'(\alpha)\ne 0$). On the one hand, we have that $p(N_p(\alpha))=0$, and so substituting in \eqref{eq:critical_traub} it is clear that $T_{p,\delta}'(\alpha)=0$ and $\alpha$ is a critical point of $T_{p,\delta}$. On the other hand, we have that $N_p'(\alpha)=0$ and so doing some computations we get that 
$$
T_{p,\delta}''(\alpha)= N_p''(\alpha) (1- \delta)= \frac{p''(\alpha)}{p'(\alpha)} (1-\delta) \quad {\rm and} \quad T_{p,1}'''(\alpha) \neq 0,
$$ 
so statement (a) follows.

We turn now to statement (b).  It follows from \eqref{eq:rat_T_lambda} that the roots of $p'(z)$, i.e. poles of $T_{p,\delta}$, are preimages of $z=\infty$ of multiplicity $d+1$ and, hence, are critical points of $T_{p,\delta}$ of multiplicity  $d$ (one less than the order of the roots of $p'$ as poles of $T_{p,\delta}$). Statement (c) follows directly from \eqref{eq:critical_traub} since $p''(\gamma)=0$ implies $T_{p,\delta}^{\prime}(\gamma)=0$ and $T_{p,\delta}^{\prime\prime}(\gamma)=p^{\prime\prime\prime}(\gamma)/\left(p^{\prime}(\gamma)\right)^2$.

Finally, we prove statement (d). If $\delta\ne 1$ and $\delta \neq d^d/(d-1)^{d-1}$  we have already $d+d(d-1)+(d-2)=d^2+d -2$ critical points corresponding to zeros of $p$, $p'$ and $p''$. From Lemma \eqref{lem:infinity}(b) the global degree of the map is $d^2$ so the total number of  critical points is $2 d^2 -2$. Hence, we have  
$$
2d^2-2-(d^2+d -2)=d(d-1),
$$
extra critical points and thus (i) is proved. If $\delta=1$ the number of critical points corresponding to zeros of $p$, $p'$ and $p''$  is $2d+d(d-1)+(d-2)=d^2+2d -2$ while again the global degree of the map is $d^2$, so the total number of extra critical points is
$$
2d^2-2-(d^2+2d -2)=d(d-2).
$$
This proves (ii). Finally, if $\delta=d^d/(d-1)^{d-1}$ the number of  critical points related to the zeros of $p$, $p'$ and $p''$ is equal to  $d+d(d-1)+(d-2)=d^2+d -2$, while the global degree of the map is now $d^2-1$ (this is a degeneracy parameter where the degree decreases by 1), so the total number of extra critical points is
$$
2(d^2-1)-2-(d^2+d -2)=d(d-1)-2.
$$
This proves (iii).
\end{proof}

%
%
%
%
%
%
%
%

\section{The quadratic case}\label{sec:quadratic}

In this section we assume that $p$ is a monic polynomial of degree 2. The main goal is to prove Theorem A with the assumption (a). We first study the case for which the quadratic polynomial has 2 different simple roots.  Let 
\begin{equation}\label{eq:p2}
p_2(z)=(z-\alpha_1)(z-\alpha_2),
\end{equation}
with $\alpha_1\ne \alpha_2 \in \mathbb C$ and $\alpha_1\neq \alpha_2$. A key feature to understand the dynamics of  $T_{p_2,\delta}$ is the fact that this map is conjugated to a map $G_{\delta}$ which depends on $\delta$ but does not depend on $\alpha_1$ and $\alpha_2$ (compare \cite{DampedTraubQuadratic}). Indeed, let 
\begin{equation}\label{eq:h}
h(z)= \frac{z-\alpha_2}{z-\alpha_1}
\end{equation}
be the M\"obius transformation which sends $\alpha_2$, $\alpha_1$, and $\infty$ to 0, $\infty$, and $1$, respectively. A simple computation shows that for all $z\in \hat{\mathbb C}$, we have   $G_{\delta}(z)\,=\,( h \circ T_{p_2,\delta} \circ h^{-1})(z)$, where
\begin{equation} \label{eq:g_delta}
G_\delta(z)= z^2 \ \frac{z^2+2z+(1-\delta)}{(1-\delta)z^2+2z+1}.
\end{equation}
In other words $T_{p_2,\delta}$  is analytically conjugate to the rational map $G_\delta$.
We would like to remark that a first study of the dynamics of the map $G_{\delta}$ was done in \cite{DampedTraubQuadratic}, where the authors studied for which parameters there are attracting fixed points  (other than the roots) and attracting cycles of period 2 and performed several numerical experiments.

The next lemma states that if $\delta\in \mathbb R$ then  $G_\delta$  is a Blaschke product (see \cite[p. 162-163]{MilnorBook} for details). This is an important property since Blaschke products leave the unit circle invariant.
\begin{lemma}\label{lemma:blaschke}
If  $\delta\in \mathbb R$ we have that $G_\delta$ is a  Blaschke product. 
\end{lemma}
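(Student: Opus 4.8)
The plan is to show that $G_\delta$ maps the unit circle $\partial\mathbb{D}$ onto itself, which is the defining property of a finite Blaschke product. The crucial structural observation is that, for $\delta\in\mathbb{R}$, the map $G_\delta$ in \eqref{eq:g_delta} is \emph{self-reciprocal}: the denominator $(1-\delta)z^2+2z+1$ is exactly the coefficient-reversal of the numerator factor $z^2+2z+(1-\delta)$. I would exploit this to verify the functional identity $G_\delta(z)\,G_\delta(1/z)=1$, valid as an identity of rational functions, and then transfer it to the unit circle.

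First I would record the elementary remark that, since $\delta\in\mathbb{R}$, all coefficients of $G_\delta$ are real, so $\overline{G_\delta(w)}=G_\delta(\overline{w})$ for every $w\in\widehat{\mathbb{C}}$. Next I would compute $G_\delta(1/z)$ directly: substituting $1/z$ for $z$ in \eqref{eq:g_delta} and clearing the factors $z^{\pm2}$ interchanges the numerator factor and the denominator of $G_\delta$, yielding $G_\delta(1/z)=\frac{1}{z^2}\cdot\frac{(1-\delta)z^2+2z+1}{z^2+2z+(1-\delta)}$. Multiplying this by \eqref{eq:g_delta}, the two quadratic factors cancel against each other and the factors $z^2$ and $1/z^2$ cancel as well, giving the identity $G_\delta(z)\,G_\delta(1/z)=1$.

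Finally I would combine the two facts on $\partial\mathbb{D}$. For $|z|=1$ we have $1/z=\overline{z}$, so the identity $G_\delta(z)\,G_\delta(1/z)=1$ together with the reality of the coefficients gives $G_\delta(z)\,\overline{G_\delta(z)}=|G_\delta(z)|^2=1$; hence $G_\delta(\partial\mathbb{D})\subseteq\partial\mathbb{D}$. Equivalently, writing $\rho(z)=1/\overline{z}$ for the reflection in the unit circle, the same identity shows $G_\delta\circ\rho=\rho\circ G_\delta$, so $G_\delta$ commutes with $\rho$ and therefore respects the splitting of $\widehat{\mathbb{C}}$ into the two complementary discs; this is precisely the characterisation of a finite Blaschke product recalled in \cite[p.~162--163]{MilnorBook}.

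There is no serious analytic obstacle here: the statement reduces to an algebraic identity underpinned by a single structural observation. The only point requiring care is the bookkeeping of the palindromic (reciprocal) relationship between numerator and denominator, and this is exactly where the hypothesis $\delta\in\mathbb{R}$ enters: for non-real $\delta$ the reality step $\overline{G_\delta(w)}=G_\delta(\overline{w})$ fails, $G_\delta$ need not preserve $\partial\mathbb{D}$, and the conclusion genuinely breaks down, so the assumption cannot be dropped.
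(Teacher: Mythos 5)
Your proof is correct, and it takes a genuinely different route from the paper's. The paper argues by explicit factorisation: it computes the zeros $\xi_{\pm}=-1\pm\sqrt{\delta}$ and the poles $w_{\pm}=\xi_{\pm}/(1-\delta)$ of $G_\delta$, uses $\xi_+\xi_-=1-\delta$ to see that each pole is the reciprocal of a zero, and rewrites $G_\delta(z)=z^2\left(\frac{z-\xi_+}{1-z\,\xi_+}\right)\left(\frac{z-\xi_-}{1-z\,\xi_-}\right)$, treating $\delta=1$ separately because the pole formula degenerates there. You instead establish the identity $G_\delta(z)\,G_\delta(1/z)=1$ --- which is precisely Lemma~\ref{lemma:conj} of the paper, valid for all $\delta\in\mathbb{C}$ --- and combine it with the reality of the coefficients to get $|G_\delta|\equiv 1$ on the unit circle, then invoke the characterisation of circle-preserving rational maps as Blaschke products. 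This buys you several things: no computation of roots, no case distinction at $\delta=1$, a clean isolation of where the hypothesis $\delta\in\mathbb{R}$ enters, and you obtain directly the symmetry $G_\delta=\tau^{-1}\circ G_\delta\circ\tau$ with $\tau(z)=1/\overline{z}$, which is the property the paper actually uses afterwards (there it is deduced as a corollary of the lemma); in effect your argument unifies Lemmas~\ref{lemma:blaschke} and~\ref{lemma:conj}. The one point you should shore up is the final step: the cited pages of Milnor define Blaschke products by the product formula and show that such products preserve the circle, whereas you need the converse (a rational map with $|f|\equiv 1$ on $\partial\mathbb{D}$ is a Blaschke product in this generalised sense, with zeros allowed inside or outside the disk). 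This is standard, but since it carries the whole weight of your reduction you should either cite a statement of it or give the short argument: the relation $G_\delta(1/\overline{z})=1/\overline{G_\delta(z)}$ forces zeros and poles to pair up under reflection in the circle and to avoid the circle itself, so dividing $G_\delta$ by the Blaschke product built from its zeros yields a rational map with no zeros or poles, hence a constant of modulus one. With that supplied your proof is complete; the paper's version, by contrast, produces the Blaschke factors explicitly and needs no such auxiliary characterisation. Your closing remark is also accurate: for non-real $\delta$ the zero--pole reflection pairing fails in general, so the hypothesis cannot be dropped.
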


\begin{proof}

The case $\delta=1$ follows directly by noticing that
$
G_1(z)= z^3 \frac{(z+2)}{(1+2z)}, 
$
is a Blaschke product.  Assume in what follows that $\delta \in\mathbb R $ and $\delta \neq 1$. 
To see that $G_{\delta}$ is a Blaschke product, we compute its zeros (other than $z=0$), denoted by $\xi_{\pm}:=\xi_{\pm}(\delta)$, and its poles, denoted by $w_{\pm}=w_{\pm}(\delta)$. They are given by
\begin{equation*}
\xi_{\pm}=-1 \pm \sqrt{\delta}  \quad {\rm and}\quad    w_{\pm} =  \frac{-1 \pm \sqrt{\delta}}{1-\delta} = \frac{\xi_{\pm}}{1-\delta}, \ \qquad {\rm if} \ \  \delta \ne 1;\\ 
\end{equation*}

Notice that  $\xi_+ \xi_-=1-\delta$,  $w_{-}=1/\xi_{+}$ and $w_{+}=1/\xi_{-}$. Consequently we can rewrite $G_\delta$, as
\[
G_{\delta}(z)= z^2 \frac{(z-\xi_+)(z-\xi_-)}{\xi_+ \xi_- (z-1/\xi_+)(z-1/\xi_-)}= z^2 \left(\frac{z-\xi_+}{1-z \, \xi_+}\right) \left(\frac{z-\xi_-}{1-z \, \xi_- }\right)
\]
\noindent proving thus that $G_{\delta}$ is a Blaschke product. \end{proof}
It follows from the previous lemma that, if $\delta\in\mathbb R$ then $G_{\delta}$ is symmetric with respect to the unit circle: so $G_\delta=\tau^{-1} \circ G_\delta \circ \tau$ where $\tau(z)=1/\overline{z}$. The next lemma states that the map $\varsigma(z)=1/z$ conjugates $G_{\delta}$ with itself for all $\delta\in\mathbb C$.  The proof is straightforward.

\begin{lemma} \label{lemma:conj}
Let $\delta\in\mathbb C$ and let $\varsigma(z)=1/z$. Then for all $z\in\hat{\mathbb C}$ we have $$G_\delta(z)=(\varsigma^{-1} \circ G_\delta \circ \varsigma)(z).$$ 
\end{lemma}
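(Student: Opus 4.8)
The plan is to exploit the fact that $\varsigma$ is an involution. Since $\varsigma(\varsigma(z))=z$, we have $\varsigma^{-1}=\varsigma$, and therefore the asserted conjugacy $G_\delta=\varsigma^{-1}\circ G_\delta\circ\varsigma$ is equivalent to the single functional identity
\begin{equation*}
\frac{1}{G_\delta(1/z)}=G_\delta(z),\qquad z\in\hat{\mathbb C}.
\end{equation*}
So it suffices to verify this identity by direct substitution in the explicit formula \eqref{eq:g_delta}.

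First I would compute $G_\delta(1/z)$ by replacing $z$ with $1/z$ in \eqref{eq:g_delta} and clearing the inner fractions. Multiplying numerator and denominator of the quotient by $z^2$ turns the numerator $z^2+2z+(1-\delta)$ evaluated at $1/z$ into $1+2z+(1-\delta)z^2$, and the denominator $(1-\delta)z^2+2z+1$ evaluated at $1/z$ into $(1-\delta)+2z+z^2$. The structural reason the computation closes is the reciprocal (palindromic) symmetry between these two quadratics: the coefficient string $(1,2,1-\delta)$ of the numerator is the reverse of the string $(1-\delta,2,1)$ of the denominator. Consequently, after the substitution the roles of numerator and denominator are exactly interchanged, up to the prefactor $1/z^2$, so taking the reciprocal of $G_\delta(1/z)$ returns $z^2$ times the original quotient, that is, $G_\delta(z)$.

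Finally I would address the finitely many exceptional points --- namely $z=0$, $z=\infty$, and the poles of $G_\delta$ --- where the formal manipulation above is a priori indeterminate. Since both $G_\delta$ and $\varsigma^{-1}\circ G_\delta\circ\varsigma$ are rational self-maps of $\hat{\mathbb C}$ that agree on the dense open set where all expressions are defined, they coincide everywhere by continuity. There is essentially no serious obstacle here: the whole content of the lemma is the palindromic relation between the two quadratic factors in \eqref{eq:g_delta}, and once this symmetry is noticed the verification reduces to a short algebraic check, which is why the statement can be asserted to be straightforward.
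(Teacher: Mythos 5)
Your verification is correct: the palindromic relation between the coefficient strings $(1,2,1-\delta)$ and $(1-\delta,2,1)$ in \eqref{eq:g_delta} gives $1/G_\delta(1/z)=G_\delta(z)$ immediately, and the identity-theorem remark handles the exceptional points. The paper omits the proof as ``straightforward,'' and this direct algebraic check is exactly the intended argument, so your proposal matches the paper's approach.
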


The following lemma deals with the critical orbits for $G_\delta$ and in particular shows the existence of a well defined one-dimensional $\delta$-parameter plane for the family $G_\delta$. The proof depends strongly on the previous lemma since the conjugacy $\varsigma$ allows us to tie the dynamics of the free critical orbits since both critical orbits behaves symmetrically.

\begin{lemma} \label{lemma:def_K}
Let $G_\delta$ be the family of maps given by \eqref{eq:g_delta} and assume that $\delta \neq 0$. The following statements hold.
\begin{itemize}
\item[(a)] The map   $G_\delta$ has degree 4, so it has 6 critical points counting multiplicity.
\item[(b)] If $\delta\ne 1$ then the critical points are given by 
$z=0, z=-1\ {\rm (double)},  z=\infty, \ {\rm and} \  z_{\pm}=c_{\pm}(\delta)$, where
\begin{equation}
\label{eq:critical points}
c_{\pm}(\delta)= \frac{-(2+\delta) \pm \sqrt{(2+\delta)^2 -4(1-\delta)^2}}{2(1-\delta)}.
\end{equation}
Moreover 
$$
\lim_{\delta\to 1} c_{+} (\delta) = 0 \quad {\rm and} \quad \lim_{\delta\to 1} c_{-} (\delta) = \infty. 
$$   
\item[(c)] If $\delta=1$ there are three double critical points given by $z=0, z=\infty$, and $z=-1$.  
\item[(d)] The orbit of all critical points different from $c_{\pm}$ is prescribed. Precisely $G_{\delta}(0)=0$, $G_{\delta}(\infty)=\infty$, $G_{\delta}(1)=1$ and $G_{\delta}(-1)=1$. In particular $z=0$ and $z=\infty$ are superattracting fixed points. We denote by $\mathcal A_{\delta}^{\star}(0)$ and $\mathcal A_{\delta}^{\star}(\infty)$ the immediate basins of attraction, respectively.
\item[(e)] If $\delta \in (0,1)$ we have $G_\delta(x)\ne x$ for all $x\in (0,1)$ and $G^{\prime}_\delta(x)>0$ for all $x\in (0,\infty)$. 
\item[(f)] The  critical points $c_{\pm}(\delta)$ satisfy $c_+(\delta)=1/c_-(\delta)$. Moreover, their orbits are symmetric with respect to $\varsigma(z)=1/z$, i.e.\ they satisfy $G_{\delta}^n(c_+(\delta))=1/G_{\delta}^n(c_-(\delta))$ for all $n\geq 1$.
\end{itemize}
In particular, $G_\delta$ defines a well defined one-dimensional $\delta$-parameter plane depending on the dynamical behaviour of the critical orbit $\{G_{\delta}^n \left( c_{+}(\delta)\right) \}_{n \geq 0}$ (compare Figure  \ref{fig:PP_Tdelta}).
\end{lemma}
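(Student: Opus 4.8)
The plan is to read every statement off the explicit rational form of $G_\delta$ together with the involution furnished by Lemma \ref{lemma:conj}. Write $G_\delta=N/D$ with $N(z)=z^2\bigl(z^2+2z+(1-\delta)\bigr)$ and $D(z)=(1-\delta)z^2+2z+1$. For (a), note that for $\delta\neq 1$ we have $\deg N=4>\deg D=2$, so it suffices to check that $N$ and $D$ have no common root; this is a finite computation using the explicit zeros $\xi_\pm=-1\pm\sqrt\delta$ and poles $w_\pm=\xi_\pm/(1-\delta)$ from Lemma \ref{lemma:blaschke}, which (for $\delta\neq 0$ in the range under consideration) never coincide. Hence $\deg G_\delta=4$, and Riemann--Hurwitz gives $2\cdot 4-2=6$ critical points counted with multiplicity.

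For (b) and (c) I would differentiate and analyse the numerator $W:=N'D-ND'$ of $G_\delta'$. A leading-coefficient computation shows that $\deg W=5$ with leading term $2(1-\delta)z^5$ when $\delta\neq 1$, so there are exactly five finite critical points with multiplicity. Direct substitution shows $z=0$ and $z=-1$ are roots of $W$, and checking the relevant derivatives confirms that $z=-1$ is a double root; factoring out these gives $W=2(1-\delta)\,z\,(z+1)^2\,Q(z)$ with $Q(z)=(1-\delta)z^2+(2+\delta)z+(1-\delta)$, whose two roots are exactly the $c_\pm(\delta)$ of \eqref{eq:critical points}. The remaining critical point is $z=\infty$, which follows from Riemann--Hurwitz or from $\varsigma$ and the critical point at $0$. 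For the limits as $\delta\to 1$, setting $\varepsilon=1-\delta$ and expanding the square root shows the $c_-$ numerator tends to $-6$ over a vanishing denominator while the $c_+$ numerator is $O(\varepsilon^2)$, giving $c_+\to 0$ and $c_-\to\infty$. Part (c) is then obtained by simplifying $G_1(z)=z^3(z+2)/(2z+1)$ directly and reading off the triple zero at $0$, the symmetric triple structure at $\infty$, and the double critical point at $-1$; these saturate all six critical points, which is precisely the collision anticipated by the limits in (b).

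Part (d) is immediate by substitution: $G_\delta(0)=0$ from the factor $z^2$, while $G_\delta(1)=G_\delta(-1)=1$ follow by evaluating $N/D$ (the value at $-1$ uses $\delta\neq 0$), and $G_\delta(\infty)=\infty$ follows from $G_\delta(0)=0$ via the relation $G_\delta\circ\varsigma=\varsigma\circ G_\delta$ of Lemma \ref{lemma:conj}; since $0$ and $\infty$ are fixed and critical they are superattracting. For (e), I would first locate, for $\delta\in(0,1)$, the poles and free critical points on the real axis: the discriminant of $Q$ factors as $3\delta(4-\delta)>0$ and its roots have product $1$ and negative sum, so $c_\pm$ are both negative, and likewise $w_\pm=(-1\pm\sqrt\delta)/(1-\delta)$ are negative. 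Thus $(0,\infty)$ contains neither a pole nor a critical point, so $G_\delta'$ has constant sign there; evaluating near $0^+$, where $G_\delta(x)\sim(1-\delta)x^2$, pins this sign as positive, giving $G_\delta'(x)>0$ on $(0,\infty)$. For the fixed points, the equation $G_\delta(z)=z$ reduces to $z\,(z-1)\bigl(z^2+(2+\delta)z+1\bigr)=0$, and the quadratic factor again has two negative real roots, so no fixed point lies in $(0,1)$ and $G_\delta(x)\neq x$ there.

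Finally, (f) is the conceptual core and the source of the concluding assertion. Vieta's formulas on $Q$ give $c_+c_-=1$, that is $c_+=\varsigma(c_-)$, and applying $G_\delta\circ\varsigma=\varsigma\circ G_\delta$ inductively yields $G_\delta^n(c_+)=1/G_\delta^n(c_-)$ for all $n\geq 1$. Combined with (d), according to which every critical orbit other than those of $c_\pm$ is prescribed, this shows that the two free critical orbits are mirror images under $\varsigma$, so the single orbit $\{G_\delta^n(c_+(\delta))\}_{n\geq 0}$ governs all the free dynamics and the $\delta$-parameter plane is genuinely one-dimensional. I expect the main technical obstacle to be the bookkeeping in factoring $W$ and, above all, the careful handling of the $0/0$ limit defining $c_+$ as $\delta\to 1$; by contrast the conceptually decisive point is the symmetry $c_+c_-=1$ in (f), which is what collapses two a priori independent critical orbits into one.
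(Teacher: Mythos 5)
Your proof is correct and takes essentially the same route as the paper, whose entire argument is that (a)--(e) are ``simple computations'' and that (f) follows from $c_+(\delta)=1/c_-(\delta)$ together with Lemma~\ref{lemma:conj} --- precisely your Vieta-plus-involution argument; your write-up just supplies the omitted computations, and they check out (note only that the factorization should read $W=2\,z\,(z+1)^2\,Q(z)$: the extra factor $(1-\delta)$ you display double-counts the leading coefficient of $Q$). One caveat you share with the paper: at the degenerate parameter $\delta=4=d^d/(d-1)^{d-1}$ (for $d=2$) the zero $\xi_+=1$ and the pole $w_-=1$ \emph{do} coincide, so $G_4$ has degree $3$, the formula \eqref{eq:critical points} degenerates, and $G_4(1)=-1\neq 1$; thus your parenthetical ``never coincide'' (and indeed the lemma as stated) tacitly assumes $\delta\neq 4$, which is harmless here since the lemma is only ever applied for $\delta\in(0,1]$.
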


\begin{proof}
Statements (a) to (e) can be obtained by simple computations. For statement (f), the fact that $c_+(\delta)=1/c_-(\delta)$ also follows from a simple computation. Using this, the fact that their orbits are symmetric with respect to $\varsigma(z)$ follows directly from Lemma~\ref{lemma:conj}.
\end{proof}

In Figure  \ref{fig:PP_Tdelta} we plot the $\delta-$plane of $G_{\delta}$ depending on the dynamical behaviour of the critical orbit $\{G_{\delta}^n \left( c_{+}(\delta)\right) \}_{n \geq 0}$.   We colour with a scaling from red (fast convergence) to blue (slow convergence) parameter values $\delta$ such that  the critical orbit is attracted by one of the two superattracting  fixed points located at the origin and infinity, while we colour in black $\delta-$values for which the critical orbit exhibits a different behaviour. In particular, the central red region $\mathcal K$ (a {\it hyperbolic component}) corresponds to parameter values where $c_{+}(\delta)$ belongs to the immediate basin of attraction of 0. More precisely, 

\begin{equation}\label{def:K}
\mathcal K=\{\delta \in \mathbb C \ | \ c_+(\delta)\in \mathcal A_\delta^{\star}(0)\}=\{\delta \in \mathbb C \ | \ c_-(\delta)\in \mathcal A_\delta^{\star}(\infty)\}.
\end{equation}

\begin{remark}\label{remark:1inK}
We have not proven that $\mathcal K$ is simply connected. For simplicity, from now on when we refer to $\mathcal K$  we restrict to the connected component of $\mathcal K$ which contains $\delta=1$. 
\end{remark}

\begin{figure}[hbt!]
\centering
    \includegraphics[width=250pt]{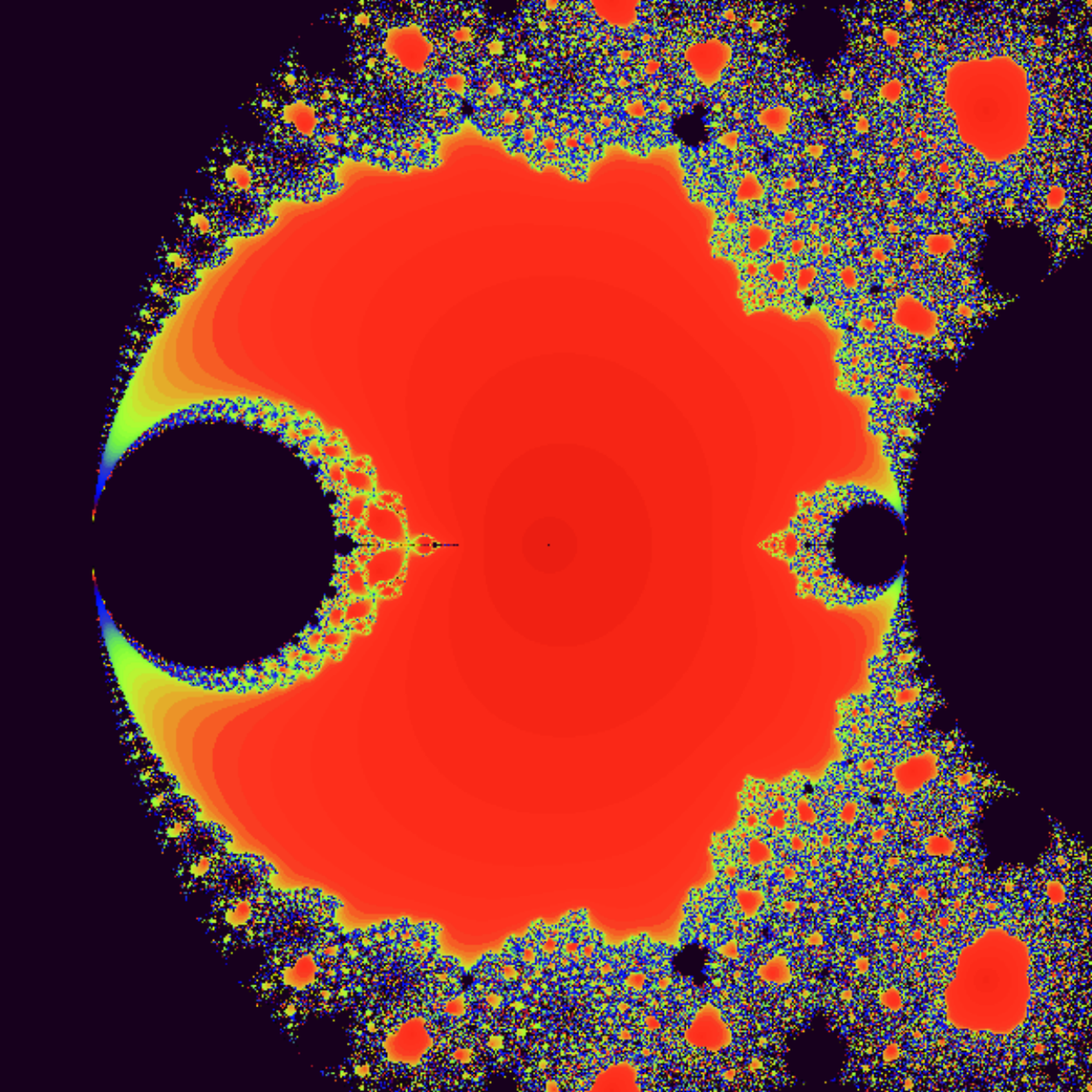} 
    \put(-145,123){\scriptsize \textcolor{white}{$\bullet$}}
        \put(-145,116){\scriptsize $0$}
       \put(-126,123){\scriptsize \textcolor{white}{$\bullet$}}
        \put(-126,116){\scriptsize $1$}
    \put(-131,163){  $\mathcal K$}
     \caption{Parameter plane of  $G_{\delta}$ near $\delta=1$. The hyperbolic component $\mathcal K$ is the central red region and it is the hyperbolic component containing Traub's method ($\delta=1$). We also indicate the position of Newton's method ($\delta=0$) on the boundary of $\mathcal K$.} 
      \label{fig:PP_Tdelta}
\end{figure}

\begin{proof}[Proof of Theorem A(a).] First we consider the degenerate case $p(z)=(z-\alpha)^2$.  Easily we obtain  
\[
T_{p, \delta}(z) = N_{p}(z) -\delta \frac{p(N_{p}(z))}{p'(z)} = \frac{z}{2} + \frac{\alpha}{2}   - \delta \frac {(\frac{z}{2} + \frac{\alpha}{2} - \alpha)^2 }{2(z-\alpha)}  =  \frac{z}{2} + \frac{\alpha}{2}   - \delta   \frac{ z-\alpha}{8}. 
\]
Therefore, $T_{p,\delta}$ is a degree one map and,  if $| \delta - 4 |<8$, the point $z=\alpha$ is a global attracting fixed point; all points in $\mathbb C$ converge to $\alpha$ under iteration. This range of $\delta$'s includes $\delta=1$, so the statement follows.

Second we take $p=p_2$, a quadratic polynomial with two different roots $\alpha_1, \alpha_2\in \mathbb C$, $\alpha_1 \ne \alpha_2$. For simplicity in the exposition, we write $T_\delta:=T_{p_2,\delta}$ with $\delta \in [0,1]$.   It is well known that the proposition is true for $\delta=0$. Hence in what follows we take $\delta\in (0,1]$.

We have seen that  $T_\delta$ is conjugate to $G_\delta$ by the M\"obius map $h(z)$ \eqref{eq:h}, no matter the polynomial $p_2$ under consideration. Notice that since $h(\infty)=1$,  the unboundedness of the immediate attracting basins for $T_\delta$ is equivalent to show that $1\in \partial \mathcal A_\delta^{\star}(0) \cap \partial  \mathcal A_\delta^{\star}(\infty)$ for $G_\delta$. This follows directly from Lemma \ref{lemma:def_K}(d-e). 

It remains to prove that $\mathcal A_\delta^{\star}(\alpha_j),\ j=1,2$, are simply connected. To see this we argue as follows. First we prove that  $(0,1]\subset \mathcal K$ (notice that Remark \ref{remark:1inK} implies $1\in \mathcal K$). Second we prove that for $\delta=1$ the result is true (notice this is enough for the conjecture). Finally, the result follows since simple connectivity is preserved for all parameters in the same hyperbolic component (see Figure \ref{fig:PP_Tdelta}).

 A simple computation shows that for $\delta\in (0,1)$ we have
$$
c_{-}(\delta)<-1<c_{+}(\delta)<0 \quad {\rm and} \quad G_\delta(c_{+}(\delta))=-\left(c_{+}(\delta)\right)^3\in (0,1).
$$
Notice that $G_\delta$ has two vertical asymptotes but they are located to the left of $c_{+}(\delta)$. In particular, $c_{+}(\delta)\in \mathcal A_\delta^{\star}(0)$ (and $c_{-}(\delta)\in \mathcal A_\delta^{\star}(\infty)$ by the symmetry) and so $(0,1]\subset \mathcal K$. 

Assume now that $\delta=1$. From Lemma \ref{lemma:def_K}(a,c), neither  $ \mathcal A_1^{\star}(0)$ nor  $\mathcal A_1^{\star}(\infty)$ contain extra critical points different from $z=0$ and $z=\infty$. Hence the local B\"otcher coordinates defined in a sufficiently small neighbourhood of $z=0$ and $z=\infty$  (compare \cite{MilnorBook})   extend to the whole immediate basin of attraction, which implies that they both are simply connected.  

To finish the proof we observe that simple connectivity is preserved inside the  hyperbolic component $\mathcal K$. For instance, we can use the fact that inside a hyperbolic component Julia sets are quasi-conformally conjugated, or $\mathcal J$-stable, see \cite[Section 4.1]{McMullenBook}.  Since $1\in \mathcal K$ the result follows.

\end{proof}

%
%
%
%
%
%
%
%

\section{The case $z^n-\beta$}\label{sec:pol}

In this section we consider a family of higher degree polynomials. More precisely,   
$$
p_{n,\beta}(z)=z^n-\beta, 
$$
where $n \geq 3$ and $\beta\in\mathbb C$.  We firstly consider the (degenerate) case $\beta =0$. Easy computations show that the  damped Traub's map applied to the polynomial $p_{n,0}(z)=z^n$ is given by  $$T_{p_n,0,\delta}(z)=\left(\frac{n-1}{n}\right) \left(1- \delta \frac{(n-1)^{n-1} }{n^n} \right) z $$.

Thus, $T_{p_n,0,\delta}$ is a degree 1 map and for $\delta \in [0,1]$ we have that $\mathcal A^*(0)=\mathbb C$ since the origin is an attracting fixed point. 

Hereafter, we take $\beta\neq 0$.  Using the symmetries of the family (to keep the dynamical behaviour of the critical points under control) and introducing some new tools, in this section we prove Theorem A(b) as a direct consequence of Proposition \ref {prop:simplyconN} (simple connectivity) and Proposition \ref{prop:unboundedN} (unboundedness).

  \begin{figure}[htb]
  	\centering
  	\subfigure[\scriptsize{$n=2$ }]{\begin{tikzpicture}
  			\begin{axis}[width=0.4\textwidth, axis equal image, scale only axis,  enlargelimits=false, axis on top]
  				\addplot graphics[xmin=-2,xmax=2,ymin=-2,ymax=2] {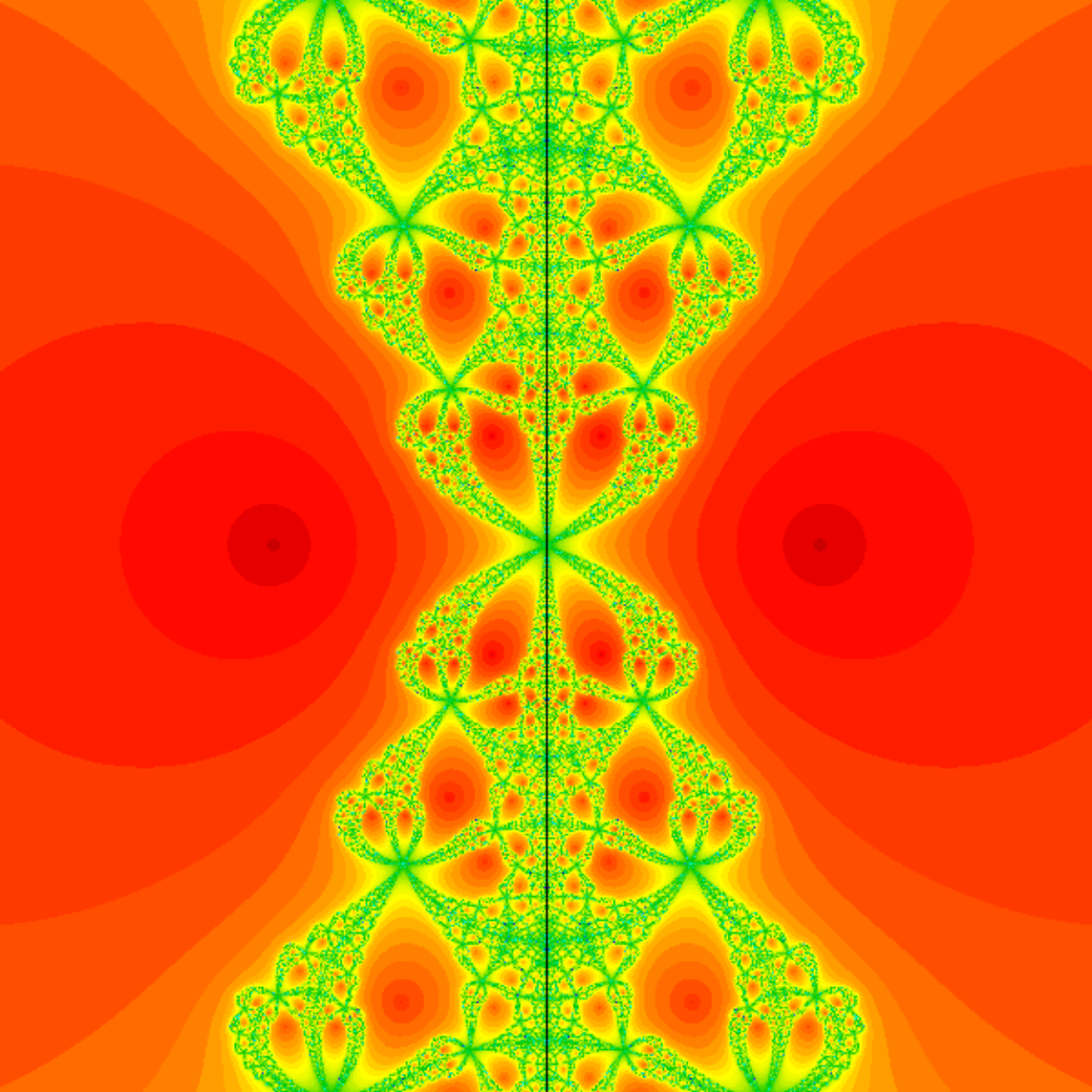};
  			\end{axis}
  	\end{tikzpicture}}
  	\subfigure[\scriptsize{$n=3$ }  ]{	\begin{tikzpicture}
  			\begin{axis}[width=0.4\textwidth, axis equal image, scale only axis,  enlargelimits=false, axis on top]
  				\addplot graphics[xmin=-2,xmax=2,ymin=-2,ymax=2] {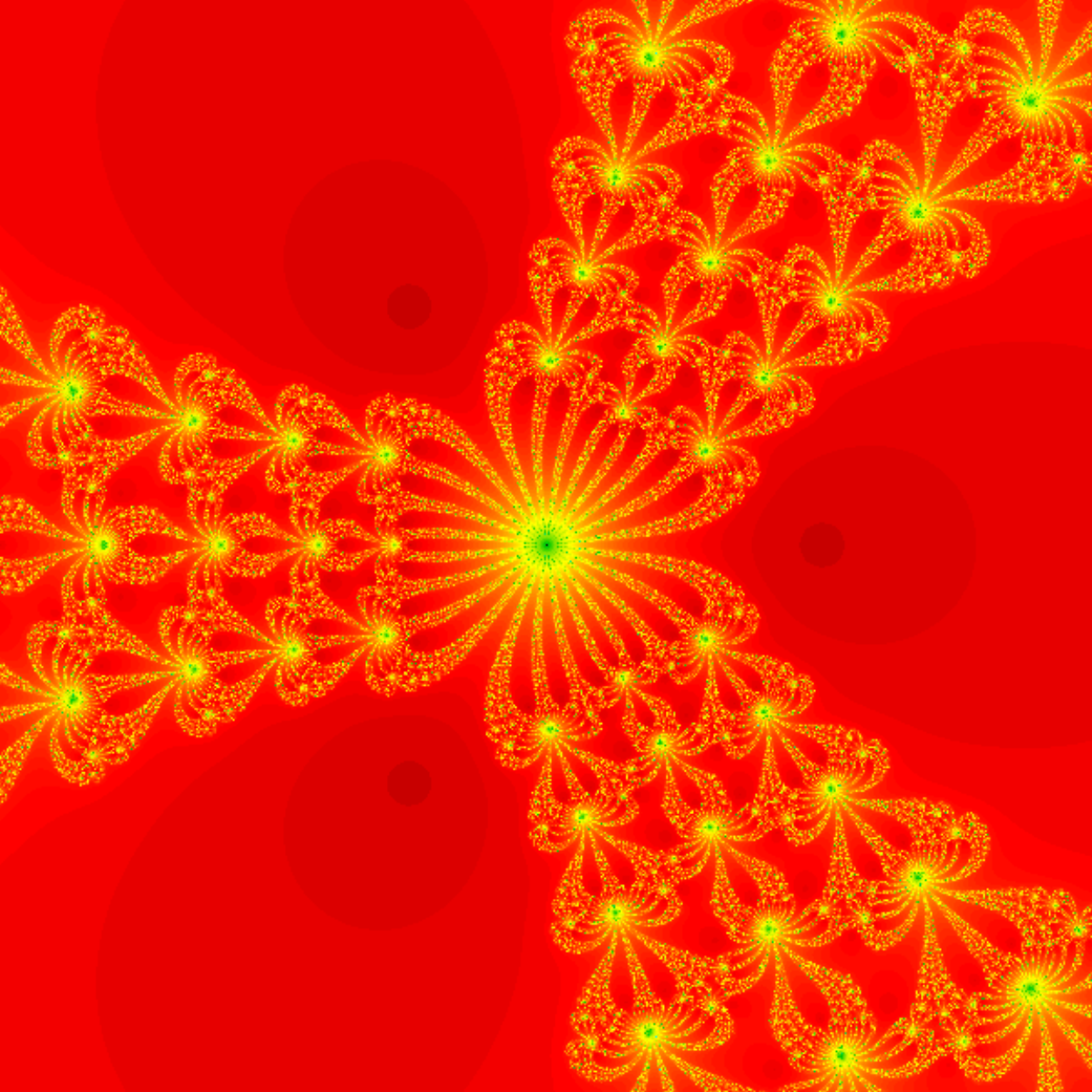};
  			\end{axis}
  	\end{tikzpicture}}
  	\subfigure[\scriptsize{$n=4$}  ]{	\begin{tikzpicture}
  			\begin{axis}[width=0.4\textwidth, axis equal image, scale only axis,  enlargelimits=false, axis on top]
  				\addplot graphics[xmin=-2,xmax=2,ymin=-2,ymax=2] {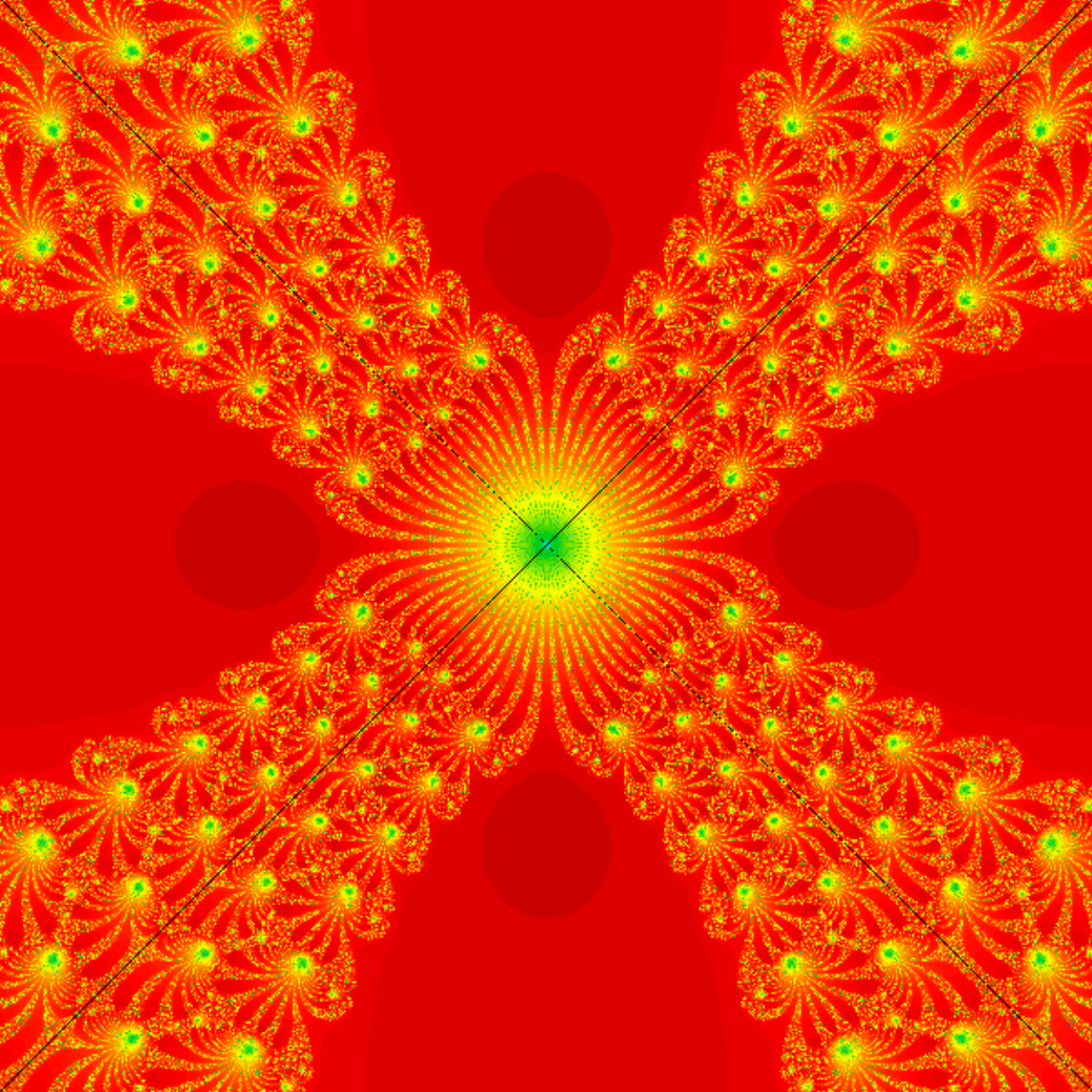};
  			\end{axis}
  	\end{tikzpicture}}
  	\subfigure[\scriptsize{$n=5$}  ]{ \begin{tikzpicture}
  			\begin{axis}[width=0.4\textwidth, axis equal image, scale only axis,  enlargelimits=false, axis on top]
  				\addplot graphics[xmin=-2,xmax=2,ymin=-2,ymax=2] {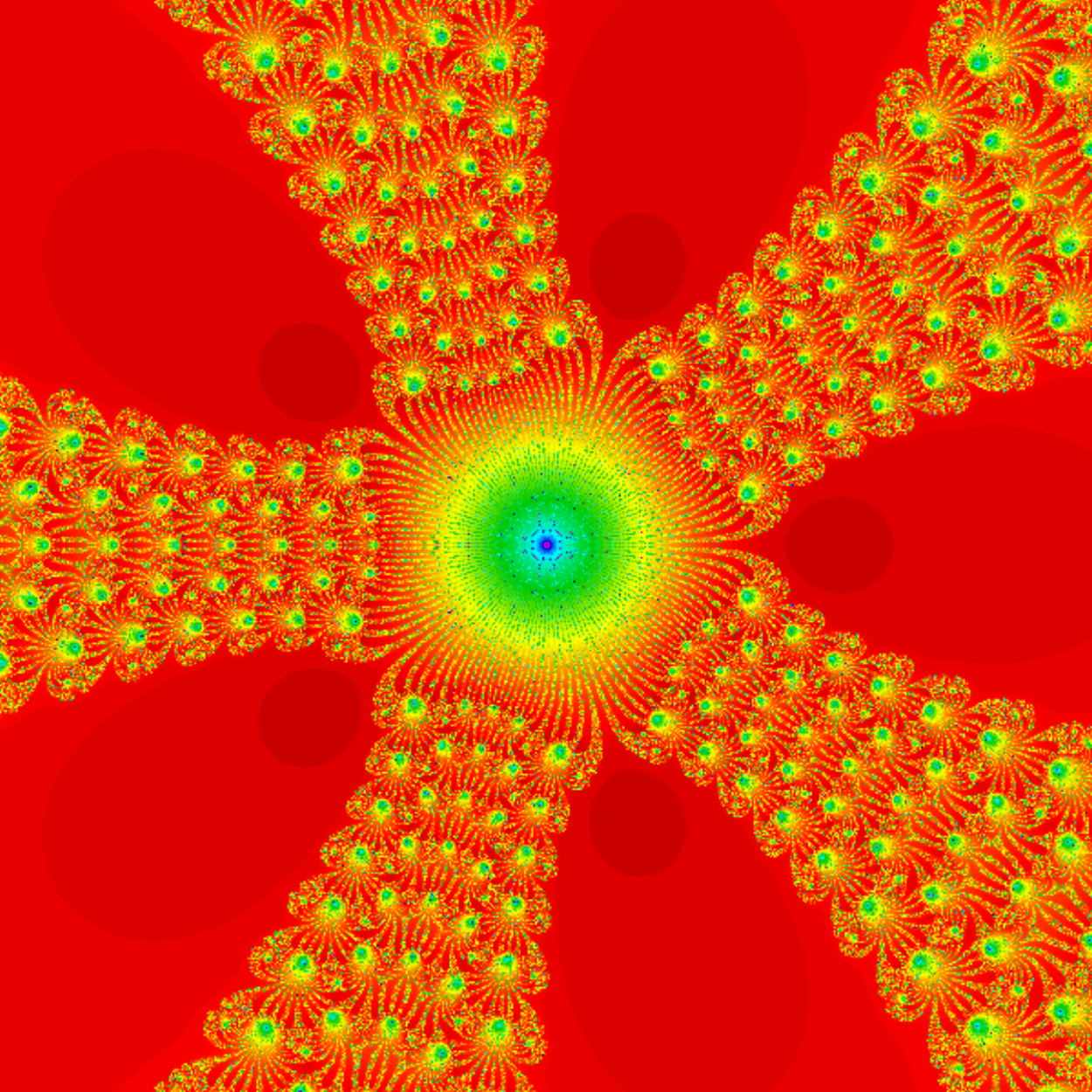};
  			\end{axis}
  	\end{tikzpicture}}
  	\caption{\small{Dynamical planes of Traub's method applied to the polynomial $p_n=z^n-1$.}}
  	\label{fig:TraubN}
  \end{figure}

 Let
\begin{equation}
\label{eq:graunaux}
N_{p_{n,\beta}}(z)=\frac{(n-1)z^n+\beta}{nz^{n-1}} \quad {\rm and} \quad {p_{n,\beta}}\left(N_{p_{n,\beta}}(z)\right)=\frac{\left((n-1)z^n+\beta\right)^n-\beta\  n^nz^{n(n-1)}}{n^nz^{n(n-1)}}.
\end{equation}
Then, the rational map obtained when applying the family of numerical methods to the polynomials $p_{n,\beta}$ is 
\begin{equation}\label{eq:graun}
	\begin{split}
	T_{p_{n,\beta},\delta}(z)&=N_{p_{n,\beta}}(z)-\delta\ \frac{{p_n}\left(N_{p_{n,\beta}}(z)\right)}{{p^{\prime}_{n,\beta}}(z)}\\
	&=\frac{n^{n}(n-1)z^{n^2}+\beta\ n^{n}(1+\delta)z^{n(n-1)}-\delta((n-1)z^n+\beta)^n}{n^{n+1}z^{n^2-1}}.
	\end{split}
\end{equation}

Notice that the $n$th-roots of $\beta$ are always superattracting fixed points of $T_{p_{n,\beta},\delta}$ since they are simple zeros of $p_{n,\beta}$. 

In order to understand the dynamics of $T_{p_{n,\beta},\delta}$ it is enough to work with $T_{p_{n},\delta}$, where 
$$p_n(z):=p_{n,1}(z)=z^n-1.$$ 
Indeed,  $T_{p_n,\delta}$ and $T_{p_{n,\beta},\delta}$ are conjugate for every $\beta\in\mathbb{C}\setminus\{0\}$. This is the content of the following lemma. The proof is straightforward and so omitted.

\begin{lemma}\label{lemma:conjugacioN}
Let $\beta\in\mathbb{C}\setminus\{0\}$ and let $\eta(z)=z\sqrt[n]{\left(1/\beta\right)} $. Then, for all $z\in \hat{\mathbb{C}}$ we have $T_{p_n,\delta}(z)=(\eta^{-1}\circ T_{p_{n,\beta},\delta}\circ \eta)(z)$
\end{lemma}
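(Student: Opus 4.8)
The plan is to obtain the conjugacy as an instance of two structural invariances of the entire damped Traub family: the \emph{scaling invariance} $T_{\lambda p,\delta}=T_{p,\delta}$ for every $\lambda\in\mathbb C\setminus\{0\}$, already recorded at the start of Section~\ref{sec:properties}, together with \emph{affine invariance} under a linear change of variables. Observe first that $\eta$ is a pure dilation $\eta(z)=cz$ fixing $0$ and $\infty$; it is designed precisely so that it carries the superattracting fixed points of one map onto those of the other, namely the $n$-th roots of $\beta$ onto the $n$-th roots of unity (up to the chosen branch of the root). This is exactly the correspondence a conjugacy must realise, which is what makes the statement plausible before any computation.

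First I would establish affine invariance in full generality. For an affine map $\phi(z)=az+b$ and any polynomial $p$, writing $q:=p\circ\phi$ and differentiating gives $q'(z)=a\,p'(\phi(z))$; substituting this into \eqref{eq:newton} shows immediately that $N_{q}=\phi^{-1}\circ N_{p}\circ\phi$. Feeding this identity into the definition \eqref{eq:traub_delta} of $T_{q,\delta}$, and using both $q'(z)=a\,p'(\phi(z))$ and $q(N_{q}(z))=p\bigl(N_{p}(\phi(z))\bigr)$, one checks that Traub's correction term transforms by the same dilation factor as the Newton term; hence the affine invariance propagates from $N_p$ to the whole family,
\[
T_{q,\delta}=\phi^{-1}\circ T_{p,\delta}\circ\phi .
\]

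It then remains to combine these two facts through a one-line computation. Since $\eta$ only rescales the variable, composing with it sends one polynomial to a nonzero constant multiple of the other: with the branch fixed so that $c^n=\beta$, one has $p_{n,\beta}\bigl(\eta(z)\bigr)=c^n z^n-\beta=\beta\,(z^n-1)=\beta\,p_n(z)$, i.e.\ $p_{n,\beta}\circ\eta=\beta\,p_n$. Applying the affine invariance with $p=p_{n,\beta}$ and $\phi=\eta$, and then discarding the scalar $\beta$ via scaling invariance, turns $T_{p_{n,\beta}\circ\eta,\delta}=\eta^{-1}\circ T_{p_{n,\beta},\delta}\circ\eta$ into the asserted identity $T_{p_n,\delta}=\eta^{-1}\circ T_{p_{n,\beta},\delta}\circ\eta$. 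There is no analytic obstacle here; the only point needing care is the bookkeeping of the $n$-th root, since choosing the reciprocal branch would replace $\eta$ by $\eta^{-1}$ and conjugate in the opposite direction. Fixing a single branch with $c^n=\beta$ throughout makes the identity hold on all of $\hat{\mathbb C}$, including at the fixed points $0$ and $\infty$ of $\eta$.
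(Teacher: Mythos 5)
Your proof is correct, and it fills a gap rather than duplicating one: the paper declares the proof of Lemma~\ref{lemma:conjugacioN} ``straightforward and so omitted'', presumably meaning a direct substitution of $\eta$ into the explicit formula \eqref{eq:graun} and a comparison of coefficients. Your route---first proving $N_{p\circ\phi}=\phi^{-1}\circ N_p\circ\phi$ and hence $T_{p\circ\phi,\delta}=\phi^{-1}\circ T_{p,\delta}\circ\phi$ for any affine $\phi$, then invoking the scaling invariance $T_{\lambda p,\delta}=T_{p,\delta}$ recorded at the start of Section~\ref{sec:properties}---is more conceptual and buys more: it shows that the whole damped Traub family is invariant under affine conjugacy, which explains both this lemma and why one may normalise to monic polynomials (and to $\beta=1$) in the first place, instead of verifying one identity for the special pair $(p_n, p_{n,\beta})$.

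One point should be made explicit rather than passed off as ``bookkeeping of the $n$th root''. Your computation requires the dilation factor $c$ of $\eta(z)=cz$ to satisfy $c^{n}=\beta$, so that $p_{n,\beta}\circ\eta=\beta\,p_{n}$. But the lemma as stated defines $\eta(z)=z\sqrt[n]{1/\beta}$, and every branch of $\sqrt[n]{1/\beta}$ satisfies $c^{n}=1/\beta$, never $c^{n}=\beta$ (unless $\beta^{2}=1$). With the stated $\eta$ one gets instead $p_{n}\circ\eta=(1/\beta)\,p_{n,\beta}$, hence the conjugacy in the opposite direction, $T_{p_{n,\beta},\delta}=\eta^{-1}\circ T_{p_{n},\delta}\circ\eta$, and the identity claimed in the lemma fails for generic $\beta$. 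So what you have proved (correctly) is the statement with $\eta(z)=\sqrt[n]{\beta}\,z$, which is exactly the map the paper itself uses later in the proof of Theorem A(b). In other words, you have not chosen a branch; you have corrected a typo in the statement, and your write-up should say so plainly instead of describing the fix as a branch choice.
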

 
Throughout the section we work with the maps $T_{p_{n},\delta}$ since, by Lemma~\ref{lemma:conjugacioN}, every result proved for $T_{p_{n},\delta}$ can be generalized to $T_{p_{n,\beta},\delta}$.
In Figure~\ref{fig:TraubN} we can observe the dynamical planes obtained when applying Traub's method to $p_n$ for several values of $n$. 

The next lemma states that the maps $T_{p_n,\delta}$ are symmetric with respect to rotation by an $n$th root of unity. Its proof is straightforward and so omitted.
\begin{lemma}\label{lem:symmetry}
	Let  $\phi(z)=\xi z$ with $\xi^n=1$. Then $T_{p_n,\delta}(\phi(z))=\phi(T_{p_n,\delta}(z))$.
\end{lemma}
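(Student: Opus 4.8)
The plan is to establish the equivariance $T_{p_n,\delta}\circ\phi=\phi\circ T_{p_n,\delta}$ by a direct computation, building it up from the elementary maps out of which $T_{p_n,\delta}$ is assembled. The starting observation is that the polynomial $p_n(z)=z^n-1$ is itself invariant under rotation by $\xi$: since $\xi^n=1$ one has $p_n(\phi(z))=(\xi z)^n-1=\xi^n z^n-1=z^n-1=p_n(z)$, so $p_n\circ\phi=p_n$. Differentiating, or computing directly from $p_n'(z)=nz^{n-1}$, gives the companion identity
\[
p_n'(\phi(z))=n\xi^{n-1}z^{n-1}=\xi^{-1}p_n'(z),
\]
where I use $\xi^{n-1}=\xi^{-1}$, valid because $\xi$ is a nonzero $n$th root of unity.

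Next I would deduce that Newton's map is equivariant. Substituting the two identities above into \eqref{eq:newton} yields
\[
N_{p_n}(\phi(z))=\xi z-\frac{p_n(z)}{\xi^{-1}p_n'(z)}=\xi\left(z-\frac{p_n(z)}{p_n'(z)}\right)=\phi(N_{p_n}(z)),
\]
so $N_{p_n}\circ\phi=\phi\circ N_{p_n}$. This is the heart of the argument, since the damped Traub correction term in \eqref{eq:traub_delta} is expressed through $N_{p_n}$.

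Finally, I would insert these facts into the definition \eqref{eq:traub_delta} of $T_{p_n,\delta}$. The correction term transforms as
\[
\frac{p_n\bigl(N_{p_n}(\phi(z))\bigr)}{p_n'(\phi(z))}=\frac{p_n\bigl(\phi(N_{p_n}(z))\bigr)}{\xi^{-1}p_n'(z)}=\xi\,\frac{p_n\bigl(N_{p_n}(z)\bigr)}{p_n'(z)},
\]
using first the equivariance of $N_{p_n}$ and then the invariance $p_n\circ\phi=p_n$ together with $p_n'\circ\phi=\xi^{-1}p_n'$. Combining this with $N_{p_n}(\phi(z))=\xi N_{p_n}(z)$ gives $T_{p_n,\delta}(\phi(z))=\xi\,T_{p_n,\delta}(z)=\phi(T_{p_n,\delta}(z))$, as required. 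The computation presents no genuine obstacle; the only point demanding care is the consistent bookkeeping of the factor $\xi^{-1}=\xi^{n-1}$ contributed by $p_n'$, which is exactly what makes each stray power of $\xi$ cancel so as to leave a single overall factor of $\xi$.
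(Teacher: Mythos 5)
Your proof is correct. The paper states that the proof of this lemma ``is straightforward and so omitted,'' so there is no written argument to compare against; your computation --- invariance of $p_n$ under $\phi$, the identity $p_n'\circ\phi=\xi^{-1}p_n'$, hence equivariance of $N_{p_n}$ and then of the damped correction term --- is precisely the straightforward verification the authors had in mind, and the bookkeeping of the factor $\xi^{-1}$ is handled correctly throughout. (An equally quick alternative is to substitute $\xi z$ directly into the closed formula \eqref{eq:graun}: every exponent appearing in the numerator is a multiple of $n$, so the numerator is invariant, while the denominator $n^{n+1}z^{n^2-1}$ picks up exactly the factor $\xi^{n^2-1}=\xi^{-1}$, giving $T_{p_n,\delta}(\xi z)=\xi\,T_{p_n,\delta}(z)$ at once.)
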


This property is relevant since it ties the orbit of every critical point $c$ different from $z=0$ (or $z=\infty$) to the orbit of the critical points $\xi^i c$, $i=1,...,n-1$. This fact decreases drastically the degree of freedom of the family $T_{p_n,\delta}$ of degree $n^n$ rational maps. Indeed, from Lemma~\ref{lemma:critics} we know that, if $\delta\neq 0$ and $ \delta \neq n^n/(n-1)^{n-1}$, the critical points of $T_{p_n,\delta}$ are the $n$th-roots of  unity (which correspond to the zeros of $p_n$ and, hence, are superattracting), the point $z=0$ (which is the only zero of $p_n'$ and $p_n''$) and $n(n-1)$ other critical points. Since $z=0$ is a pole and $z=\infty$ is a fixed point for  $ \delta \neq n^n/(n-1)^{n-1}$ (see Lemma~\ref{lem:infinity}), the maps $T_{p_n,\delta}$ have $n(n-1)$ free critical points. However, by Lemma~\ref{lem:symmetry} these $n(n-1)$ critical points can be grouped in sets of $n$ critical points which have symmetrical orbits. Therefore, the maps $T_{p_n,\delta}$ only have $n-1$ free critical orbits. 
 
We want to remark that the dynamics near $\infty$ in the case $ \delta =n^n/(n-1)^{n-1}$ is very different  from that when  $\delta \neq n^n/(n-1)^{n-1}$. Indeed, for  $ \delta =n^n/(n-1)^{n-1}$ we have $T_{p_n,\delta}(\infty)=0$, so the points $z=0$ and $z=\infty$ form a superattracting cycle of period 2. However,  for  $ \delta \neq n^n/(n-1)^{n-1}$ the point $z=\infty$ is fixed and, hence, $z=0$ is prefixed.
 
  
 \begin{prop}\label{prop:simplyconN}
The immediate basins of the $n$th-roots of  unity under $T_{p_n,\delta}$ are simply connected. 
 \end{prop}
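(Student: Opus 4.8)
The plan is to follow the same three-step strategy that worked for the quadratic case in Section~\ref{sec:quadratic}: reduce to a single basin using the rotational symmetry, settle the clean Traub parameter $\delta=1$ with B\"ottcher coordinates, and then propagate to the rest of $[0,1]$ by $J$-stability. First I would invoke Lemma~\ref{lem:symmetry}: the rotation $\phi(z)=\xi z$ with $\xi=e^{2\pi i/n}$ conjugates $T_{p_n,\delta}$ to itself and cyclically permutes the $n$th roots of unity, so, being a conformal automorphism of $\hat{\mathbb C}$, it carries each immediate basin $\mathcal{A}_\delta^\star(\xi^j)$ homeomorphically onto another. Hence it suffices to prove that $\mathcal{A}_\delta^\star(1)$ is simply connected. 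For real $\delta$ the map $T_{p_n,\delta}$ also has real coefficients, so $\mathcal{A}_\delta^\star(1)$ is symmetric under complex conjugation and meets the real axis in an interval containing $z=1$; this lets me study the relevant critical orbits on the real line inside the fundamental sector $\{|\arg z|<\pi/n\}$.

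Next I would treat $\delta=1$ directly. By Lemma~\ref{lemma:critics}(a) each root of unity is then a critical point of local degree $3$, and in analogy with the quadratic model, where $c_\pm(\delta)\to 0,\infty$ as $\delta\to1$ (Lemma~\ref{lemma:def_K}(b)), one expects that at $\delta=1$ the critical points previously captured by the immediate basins have merged into the superattracting fixed points. The key sublemma to establish is that none of the remaining $n(n-2)$ free critical points of $T_{p_n,1}$ lies in any immediate basin $\mathcal{A}_1^\star(\xi^j)$; they may lie in outer components of the full basin, in other Fatou components, or on $\mathcal{J}(T_{p_n,1})$. Granting this, the immediate basin of $z=1$ contains no critical point other than $z=1$ itself, so its B\"ottcher coordinate extends to a conformal isomorphism onto $\mathbb{D}$ (see \cite{MilnorBook}), and $\mathcal{A}_1^\star(1)$ is simply connected.

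Finally I would extend to $\delta\in[0,1)$. The goal is to show that the whole segment $[0,1]$ lies in a single hyperbolic component of the $\delta$-parameter plane, i.e.\ that for every $\delta\in[0,1]$ each free critical orbit of $T_{p_n,\delta}$ converges to a root of unity. Note that for $\delta\in[0,1]$ the point $z=\infty$ is a repelling fixed point by Lemma~\ref{lem:infinity}(a), and $z=0$ is a pole that is prefixed to it, so the roots of unity are the only attractors and such control of the free orbits yields hyperbolicity. By the rotational symmetry the $n(n-1)$ free critical points group into $n-1$ orbits with symmetric dynamics, so it is enough to follow $n-1$ representatives, which (as with $c_+(\delta)$ for $n=2$) I would track along the real axis of the fundamental sector. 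Since $[0,1]$ is connected and contained in the hyperbolic locus, all the maps $T_{p_n,\delta}$, $\delta\in[0,1]$, are quasiconformally conjugate and hence $J$-stable (see \cite[Section~4.1]{McMullenBook}); the conjugating homeomorphism of $\hat{\mathbb C}$ sends Fatou components to Fatou components and preserves simple connectivity, transporting the $\delta=1$ conclusion to all of $[0,1]$.

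The hard part will be the control of the free critical orbits, which is genuinely heavier than in the quadratic case: there one had a single symmetric pair $c_\pm$, whereas here there are $n-1$ independent free orbits. Both the $\delta=1$ sublemma (that these points avoid the immediate basins) and the verification that the entire segment $(0,1)$ is free of bifurcations rest on this control. Confining the analysis to the fundamental sector bounded by the invariant rays, where the induced dynamics should be essentially one-dimensional and monotone, is the mechanism I expect to make these orbits trackable; establishing that monotonicity uniformly in $\delta$ and $n$ is, I anticipate, the main obstacle.
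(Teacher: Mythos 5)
Your proposal transplants the three-step quadratic-case strategy (symmetry reduction, B\"ottcher at $\delta=1$, $J$-stability along $[0,1]$), but both of its load-bearing steps are announced rather than proved, and they are precisely the hard content. First, the ``key sublemma'' that no free critical point of $T_{p_n,1}$ lies in an immediate basin is left open, and the logical order you propose is the reverse of what actually works: in the paper's cubic analysis (Proposition \ref{prop:cubic}) the fact $c_{1,-}\notin\mathcal{A}^*(1)$ is \emph{deduced from} simple connectivity of $\mathcal{A}^*(1)$ (via the reflection/surrounding-the-pole argument), so using the location of the free critical points as the input for simple connectivity would require a genuinely new argument that you do not supply. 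Second, the claim that all of $[0,1]$ lies in a single hyperbolic component requires controlling $n-1$ independent free critical orbits for every $n$; for $n\geq 4$ these representatives need not be real, the ``fundamental sector'' $\{|\arg z|<\pi/n\}$ is not forward invariant, so the plan of tracking them monotonically on the real axis breaks down. This is exactly the control the paper states it does not provide (``we have not provided any control on the dynamics of the orbits of the free critical points''), and even for $n=3$ it takes a separate proposition. Your approach also only reaches $\delta\in[0,1]$, whereas the statement has no such restriction.

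By contrast, the paper's proof needs none of this machinery and is a few lines long: by Lemma \ref{lem:symmetry} the $n$ immediate basins are rotated copies of one another, so either all are simply connected or all are multiply connected; a multiply connected immediate basin of a (super)attracting fixed point must surround a pole, by the maximum modulus principle applied to the iterates; but the only pole of $T_{p_n,\delta}$ is $z=0$, so all $n$ basins would surround the origin, and the rotational symmetry would then force two distinct Fatou components to intersect --- a contradiction. You should look for this kind of obstruction argument: the special structure here (a single pole plus an order-$n$ symmetry) makes the critical orbits irrelevant to simple connectivity, and is what lets the result hold for every $\delta$ at once.
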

\begin{proof}
	It follows from Lemma~\ref{lem:symmetry} that the immediate basins of attraction of the $n$th-roots of  unity are symmetric with respect to rotation by an $n$th-root of unity. Therefore, either all immediate basins of attraction are simply connected or they all are multiply connected. 

Assume that they are multiply connected.  It follows from the maximum modulus principle that all immediate basins of attraction have to surround a pole. Since $z=0$ is the only pole of $T_{p_n,\delta}$, all immediate basins of attraction of the $n$th-roots of unity surround $z=0$. Since they are symmetric with respect to rotation by an $n$th-root of unity, that would imply that the immediate basins of attraction have non-empty intersection, which is a contradiction.
\end{proof}

We now turn our attention to the unboundedness. We already know from Lemma~\ref{lem:infinity} that for some $\delta$-parameters the point $z=\infty$ is an attracting fixed point of $T_{p_n,\delta}$ and, hence, the immediate basins of attraction of the roots of $p_n$ cannot be unbounded for such parameters. Notice that the parameter $\delta=1$ does not belong to this set of \textit{bad} parameters, so the unboundedness part of the Conjecture still makes sense. The next result states that the immediate basins of attraction of the roots are unbounded if $\delta\in[0,1]$.

\begin{prop}\label{prop:unboundedN}
Let  $\delta\in[0,1]$ and $n\geq 3$. Then, the immediate basins of attraction of the $n$th-roots of unity under the map $T_{p_n,\delta}$ are unbounded. 
\end{prop}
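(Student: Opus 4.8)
\textbf{The plan} is to exhibit, for each $n$th root of unity, an explicit unbounded connected subset of its immediate basin and thereby deduce unboundedness. By the rotational symmetry of Lemma~\ref{lem:symmetry}, the affine maps $\phi(z)=\xi z$ with $\xi^n=1$ conjugate $T_{p_n,\delta}$ to itself and permute the roots, carrying $\mathcal A_\delta^{\star}(1)$ onto the immediate basin of each other root while preserving unboundedness. Hence it suffices to treat the root $z=1$, and the whole statement reduces to proving that $\mathcal A_\delta^{\star}(1)$ is unbounded.

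Since $\delta$ is real, $T_{p_n,\delta}$ has real coefficients and preserves $\mathbb R\cup\{\infty\}$, so my candidate unbounded set is the ray $(1,\infty)$. The heart of the argument is the two-sided inequality
\[
1< T_{p_n,\delta}(x) < x \qquad \text{for all } x>1 \text{ and all } \delta\in[0,1].
\]
Writing $T_{p_n,\delta}(x)=N_{p_n}(x)-\delta\, p_n(N_{p_n}(x))/p_n'(x)$, the upper bound comes quickly: on $(0,\infty)$ the polynomial $p_n$ is convex with its only positive root at $1$, so the Newton step satisfies $1<N_{p_n}(x)<x$, whence $p_n(N_{p_n}(x))>0$ and the subtracted term is nonnegative. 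For the lower bound I would factor
\[
T_{p_n,\delta}(x)-1=\left(N_{p_n}(x)-1\right)\left[1-\delta\,\frac{N_{p_n}(x)^{n-1}+\dots+N_{p_n}(x)+1}{n\,x^{n-1}}\right],
\]
and observe that, because $1<N_{p_n}(x)<x$, each of the $n$ terms in the numerator is at most $x^{n-1}$, with at least one strictly smaller; the bracket is therefore strictly positive exactly when $\delta\le 1$.

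With the invariance $T_{p_n,\delta}\big((1,\infty)\big)\subset(1,\infty)$ and the contraction $T_{p_n,\delta}(x)<x$ in hand, every orbit starting in $(1,\infty)$ is strictly decreasing and bounded below by $1$, hence converges to a fixed point; as $T_{p_n,\delta}$ has no fixed point in $(1,\infty)$, the limit must be $1$. Thus $(1,\infty)\subset\mathcal A_\delta(1)$. Since points just above $1$ lie in a neighbourhood of the superattracting fixed point $z=1$, they belong to $\mathcal A_\delta^{\star}(1)$; connectedness of the ray then forces all of $(1,\infty)$ into $\mathcal A_\delta^{\star}(1)$, which is therefore unbounded, and the symmetry argument finishes the proof.

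\textbf{The main obstacle} I anticipate is the lower bound $T_{p_n,\delta}(x)>1$: this is precisely where the hypothesis $\delta\le 1$ enters, and the delicate point is to obtain an estimate that is uniform in both $x>1$ and $\delta\in[0,1]$. The factorisation above reduces it to the clean comparison of $\sum_{k=0}^{n-1}N_{p_n}(x)^k$ against $n\,x^{n-1}$, but one must be careful to extract a \emph{strict} inequality for every $x>1$. I expect that for $\delta>1$ the map overshoots below $1$ and the ray argument breaks down, consistent with the extra attracting fixed points noted in the introduction, so the restriction $\delta\in[0,1]$ is genuinely used here.
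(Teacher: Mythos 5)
Your proof is correct, and its skeleton is the same as the paper's: reduction to the root $z=1$ via the rotational symmetry of Lemma~\ref{lem:symmetry}, invariance of the real line for real $\delta$, and the two-sided inequality $1<T_{p_n,\delta}(x)<x$ on $(1,\infty)$ forcing the ray into $\mathcal{A}^{\star}(1)$. The upper bound is also handled identically (positivity of $p_n$, $p_n'$ and $p_n(N_{p_n}(x))$ for $x>1$). Where you genuinely diverge is the crux, the lower bound $T_{p_n,\delta}(x)>1$. The paper clears denominators, turns $T_{p_n,\delta}(x)=1$ into a polynomial equation $S_{\delta,n}(x)=0$ of degree $n^2$, computes $S_{\delta,n}(1)=S_{\delta,n}'(1)=0$ and the sign of $S_{\delta,n}''(1)$, and invokes Descartes' rule of signs to rule out solutions with $x>1$, finally using $T_{p_n,\delta}(x)\to\infty$ to fix the sign. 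You instead use the factorization $p_n(w)=(w-1)(w^{n-1}+\cdots+w+1)$ with $w=N_{p_n}(x)$ to obtain the identity
\[
T_{p_n,\delta}(x)-1=\bigl(N_{p_n}(x)-1\bigr)\Bigl[1-\delta\,\frac{N_{p_n}(x)^{n-1}+\cdots+N_{p_n}(x)+1}{n\,x^{n-1}}\Bigr],
\]
which is easily checked, and the elementary estimate $\sum_{k=0}^{n-1}N_{p_n}(x)^{k}<n\,x^{n-1}$ (immediate from $1<N_{p_n}(x)<x$) makes the bracket strictly positive for all $\delta\in[0,1]$. Your route is shorter and more transparent: it avoids the expansion $A_n(x)$, the derivative computations at $x=1$, and Descartes' rule; it treats $\delta=0$ uniformly rather than by citation; and it isolates exactly where the hypothesis $\delta\le 1$ enters. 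What the paper's computation buys in exchange is finer information about the real solutions of $T_{p_n,\delta}(x)=1$ (a triple root at $x=1$ for $\delta=1$ versus a double root at $x=1$ plus a simple root $x_0<1$ for $\delta\in(0,1)$), though that extra structure is not used elsewhere. Two minor wording points: ``strictly positive \emph{exactly} when $\delta\le1$'' should be ``whenever $\delta\le 1$'' (you prove one implication, which is all that is needed); and your closing monotone-orbit argument (decreasing, bounded below, no fixed points in $(1,\infty)$, hence convergence to $1$, then connectedness placing the ray in the immediate basin) is a correct and welcome expansion of the paper's terse claim that the inequality yields $[1,\infty)\subset\mathcal{A}^{\star}(1)$.
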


\begin{proof}
The case $\delta=0$ is well known, see \cite{ConnectivityJulia}. So we may assume $\delta\in(0,1]$.  The maps $T_{p_n,\delta}$ are symmetrc with respect to rotation by an $n$th-root of  unity (Lemma~\ref{lem:symmetry}). 
Since $\delta$ is real, the real line is forward invariant under $T_{p_n,\delta}$. Moreover $T_{p_n,\delta}(1)=1$ and $T_{p_n,\delta}^{\prime}(1)=0$. Hence, $x=1$ is a superattracting fixed point for $T_{p_n,\delta}$. Denote by $\mathcal{A}^\star(1)$ the immediate basin of attraction of $x=1$.  If we prove that for all $x>1$ we have $1<T_{p,\delta}(x)<x$, we can conclude that $[1,\infty) \subset \mathcal{A}^{\star}(1)$ and, hence, the result follows. 

The inequality $T_{p_n,\delta}(x)< x$  is equivalent to 
\begin{equation}\label{eq:smallthanx}
\frac{p_n(x) + \delta  p_n\left(N_{p_n}(x)\right)}{p_n'(x)} >0.
\end{equation}
Since $p_n(x)=x^n-1$ and $p_n'(x)=n x^{n-1}$, we have $p_n(x)>0$ and $p_n'(x)>0$ for all $x>1$. Recall that 
\[
N_{p_n}(x)= \frac{(n-1)x^n+1}{nx^{n-1}}.
\]
Therefore, $N_{p_n}(1)=1$ and we know that $N_{p_n}(x)>1$ for all $x >1$. Thus we can conclude that $p_n(N_{p_n}(x))>0$ for all $x >1$. This implies that  the inequality \eqref{eq:smallthanx} is satisfied for $x>1$.

Now we prove $T_{p_n,\delta}(x)>1$ for all $x>1$. Easy manipulations imply that the equation $T_{p_n,\delta}(x)=1$ (for $x>1$) can be written  as
\begin{equation}\label{eq:pn_1}
p_n\left( N_{p_n}(x) \right)=\frac{1}{\delta}\left((n-1)x^{n}-nx^{n-1}+1\right).
\end{equation} 
Since 
$$
p_n\left(N_{p_n}(x)\right)=\frac{1}{n^nx^{n^2-n}}\left[\left((n-1)x^n+1\right)^n-n^nx^{n^2-n}\right]
$$
we have that \eqref{eq:pn_1} is equivalent to
\begin{equation}\label{eq:pn_2}
\left((n-1)x^n+1\right)^n=\frac{n^n}{\delta}\left[(n-1)x^{n^2}-nx^{n^2-1}+(1+\delta)x^{n^2-n}\right].
\end{equation}
The left hand side of this equation expands as 
\begin{equation} \label{eq:Newton}
\left((n-1)x^n+1\right)^n=(n-1)^nx^{n^2}+n(n-1)^{n-1}x^{n^2-n}+A_n(x) 
\end{equation}
where 
$$
A_n(x)=\sum_{j=2}^n 
\left(\begin{array}{c}n \\
j\end{array}\right)
(n-1)^{n-j}x^{n^2-nj}.
$$
Thus, if we set 
$$
S_{\delta,n}(x):=(n-1)\left(\frac{n^n}{\delta}-(n-1)^{n-1}\right)x^{n^2}-\frac{n^{n+1}}{\delta}x^{n^2-1}+\left(\frac{n^n(1+\delta)}{\delta}-n(n-1)^{n-1}\right)x^{n^2-n}-A_n(x)
$$
 equation \eqref{eq:pn_2} rewrites as 
\begin{equation}\label{eq:pn_3}
S_{\delta,n}(x)=0.
\end{equation}
We claim that $S_{\delta,n}(x)=0$ either  has a unique (triple) positive root at $x=1$ (case $\delta=1$), or two roots (case $\delta\in(0,1)$): $x=x_0<1$ (simple) and $x=1$ (double). Consequently the equation $T_{p_n,\delta}(x)=1$ has no solutions for $x>1$. Moreover, since 
$$
\lim_{x\to \infty} T_{p_n,\delta}=\infty,
$$
we conclude that $T_{p_n,\delta}(x)>1$ for all $x>1$, as desired.

To justify the claim we notice that using  \eqref{eq:Newton} we can compute the following expressions 
\begin{equation}\label{eq:an1}
\begin{split}
&A_n(1)=n^n-(n-1)^{n-1}(2n-1),\\
&A^{\prime}_n(1)=(n-1)\left[n^{n+1}-2n^2(n-1)^{n-1}\right] \ \ {\rm and} \\
&A^{\prime\prime}_n(1)=n^2(n-1)^2\left[n^n-(n-1)^{n-2}(2n^2-n-2)\right]
\end{split}
\end{equation}

Then, from the above expression of $S_{\delta,n}$ and \eqref{eq:an1} we get
\begin{equation}
\begin{split}
&S_{\delta,n}(1)=n^n-(n-1)^{n-1}(2n-1)-A_n(1)=0,\\
&S_{\delta,n}'(1)=(n-1)\left[n^{n+1}-2n^2(n-1)^{n-1}\right]-A^{\prime}_n(1)=0 \ \ {\rm and} \\
&S_{\delta,n}''(1)=\frac{n^{n+1}(n-1)}{\delta}\left(\delta n^2-\delta n  +1 -\delta \right)-n^2(n-1)^n(2n^2-n-2)
\end{split}
\end{equation}
Some easy computations show that $S_{1,n}''(1)=0$ and $S_{\delta,n}''(1)>0$ for all $\delta\in (0,1)$. 

Applying Descarte's rule, the polynomial equation $S_{\delta,n}(x)=0$ has either 1 or 3 positive real solution(s), counting multiplicity (this is immediate since the consecutive coefficients change sign three times). Noticing that $S_{\delta,n}(0)=-1$ we conclude the claim.
\end{proof}

We can now prove Theorem A(b).
\begin{proof}[Proof of Theorem A(b)] By Propositions~\ref{prop:simplyconN} and \ref{prop:unboundedN} we know that the immediate basins of attraction of $n$th roots of  unity under $T_{p_n,\delta}$ are simply connected and unbounded. The immediate basins of attraction of the $n$th roots of $\beta$ under $T_{p_{n,\beta},\delta}$ are also simply connected and unbounded since the conjugacy $\eta(z)=\sqrt[n]{\beta}z$ (see Lemma~\ref{lemma:conjugacioN}) sends the basins of attraction of the $n$th roots of unity to the basins of attraction of the $n$th roots of $\beta$.

\end{proof}

\subsection{The cubic case}
In the study of the maps $T_{p_n,\delta}$ we have analysed the topological properties of the immediate basins of attraction of the $n$th-roots of unity but we have not provided any control on the dynamics of the orbits of the free critical points. We finish this section studying the dynamics of the critical orbits for the cubic case, i.e.\ we study the rational maps  $T_{p_3,\delta}$ obtained when applying the numerical methods to $p_3(z)=z^3-1$. Their formula is given by 
\begin{equation*}
	T_{p_3,\delta}(z)=\frac{(54-8\delta)z^{9}+(27+15\delta)z^{6}-6\delta z^3-\delta}{81z^{8}}.
\end{equation*}
Their derivative is given by
$$T'_{p_3,\delta}(z)=\frac{(54-8\delta)z^{9}-(54+30\delta)z^{6}+30\delta z^3+8\delta}{81z^{9}}.$$
The critical points of $T_{p_3,\delta}$ are $z=0$ (which is a preimage of $z=\infty$ of multiplicity 8) and the zeros of $T'_{p_3,\delta}(z)$. These latter critical points correspond to the third roots of  unity (which are superattracting fixed points) and the points 
\begin{equation}\label{eq:critcubic}
	c_{\xi, \pm}=\xi\sqrt[3]{r_{\pm}}, \quad \mbox{where} \quad r_{\pm}=\frac{19\delta\pm \sqrt{27(16\delta+11\delta^2)}}{54-8\delta}
\end{equation}
and $\xi^3=1$. The next proposition describes where these free critical points lie when $\delta\in(0,1]$. Recall that, given an $n$th-root of  unity $\xi$, we denote by $\mathcal{A}^*(\xi)$ the immediate basin of attraction of $\xi$ under $T_{p_3,\delta}$.

\begin{prop}\label{prop:cubic}
	Let $\delta\in(0,1]$. Then, for any third root of  unity $\xi$ we have $c_{\xi, +}\in \mathcal{A}^*(\xi)$ and  $T_{p_3,\delta}(c_{\xi, -})\in \mathcal{A}^*(\xi)$. In particular, the set of parameters $(0,1]$ belongs to a hyperbolic component for which all free critical points lie in the basins of attraction of the third roots of  unity.
\end{prop}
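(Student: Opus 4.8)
The plan is to use that $\delta$ is real to confine the analysis to $\mathbb{R}$ and then to locate the two real free critical points among the real fixed points of $T_{p_3,\delta}$. By the rotational symmetry of Lemma~\ref{lem:symmetry}, $\phi(z)=\xi z$ conjugates $T_{p_3,\delta}$ to itself and maps $\mathcal{A}^*(1)$ onto $\mathcal{A}^*(\xi)$; since \eqref{eq:critcubic} gives $c_{\xi,\pm}=\xi\,c_{1,\pm}$, it is enough to prove $c_{1,+}\in\mathcal{A}^*(1)$ and $T_{p_3,\delta}(c_{1,-})\in\mathcal{A}^*(1)$, the remaining roots following by applying $\phi$. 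As $\delta$ is real, $\mathbb{R}$ is invariant; writing $T_{p_3,\delta}(x)=x$ in the variable $u=x^3$ and cancelling the known factor $u=1$ leaves a quadratic with exactly one positive root $u_+$ and one negative root, giving a single extra positive fixed point $x_+=\sqrt[3]{u_+}\in(0,1)$ and a negative fixed point. Since $T_{p_3,\delta}(x)-x$ equals this fixed-point cubic divided by $81x^8>0$, a sign count yields $T_{p_3,\delta}(x)>x$ on $(x_+,1)$ and $T_{p_3,\delta}(x)<x$ on $(1,\infty)$; combined with $[1,\infty)\subset\mathcal{A}^*(1)$ from Proposition~\ref{prop:unboundedN}, this forces every orbit in the connected interval $(x_+,\infty)$ to converge to $1$, so $(x_+,\infty)\subset\mathcal{A}^*(1)$.

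For the first claim I would show $c_{1,+}=\sqrt[3]{r_+}$ lies in $(x_+,1)$. Using \eqref{eq:critcubic}, the inequality $r_+<1$ reduces to $4(\delta-1)(\delta-\tfrac{27}{4})>0$, which holds on $(0,1)$ (with $c_{1,+}=1$ at $\delta=1$), while comparing $r_+$ with $u_+$ term by term (larger numerator, smaller denominator) gives $r_+>u_+$. Hence $x_+<c_{1,+}\le1$, and therefore $c_{1,+}\in\mathcal{A}^*(1)$.

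The second claim is the hard part. The key is to simplify the critical value: at a free critical point $c$ (so $p_3''(c)\ne0$) the bracket in \eqref{eq:critical_traub} vanishes, and substituting this back into \eqref{eq:traub_delta} gives the identity $T_{p_3,\delta}(c)=c-\delta\,p_3(c)\,p_3'(N_{p_3}(c))/p_3'(c)^2$. Setting $s=c^3$ and eliminating $\delta$ by means of the critical-point relation $(54-8\delta)s^2-38\delta s-8\delta=0$ (equivalently $\delta=27s^2/(4s^2+19s+4)$, obtained by factoring the numerator of $T_{p_3,\delta}'$) collapses this to the rational form $T_{p_3,\delta}(c)=c\,(19s^2+7s+1)/\big(s(4s^2+19s+4)\big)$. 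For $s=r_-\in[-\tfrac{4}{23},0)$ both quadratics are positive, so $T_{p_3,\delta}(c_{1,-})>0$, and after cubing the target bound $T_{p_3,\delta}(c_{1,-})\ge1$ becomes the polynomial inequality $(19s^2+7s+1)^3\ge s^2(4s^2+19s+4)^3$ on $[-\tfrac{4}{23},0)$. I expect this to be the main obstacle: it is \emph{tight}, with equality exactly at $s=-\tfrac18$ (that is $\delta=\tfrac14$, where $c_{1,-}=-\tfrac12$ maps precisely onto the root $1$), so the degree-$8$ difference must be shown nonnegative by extracting the double factor $(8s+1)^2$ and checking that the remaining sextic is positive on the interval. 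Granting this, $T_{p_3,\delta}(c_{1,-})\in[1,\infty)\subset\mathcal{A}^*(1)$.

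Finally, Lemma~\ref{lem:infinity} shows that for $\delta\in(0,1]$ the fixed point $\infty$, and hence its only preimage $z=0$, is repelling, so the roots are the only attracting cycles. Having placed every free critical point in the basin of a root, all critical orbits converge to the roots and $T_{p_3,\delta}$ is hyperbolic; since $(0,1]$ is connected and this behaviour is stable, the whole segment lies in a single hyperbolic component of the stated type.
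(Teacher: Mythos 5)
Your proposal is correct and reaches both claims, but by a genuinely different route from the paper, so it is worth comparing. For the first claim the paper argues qualitatively on the real graph: for $\delta\in(0,1)$ the fixed point $x=1$ is a local extremum of $T_{p_3,\delta}|_{\mathbb R}$, $T_{p_3,\delta}(x)\to-\infty$ as $x\to 0^+$, and $c_{1,+}$ is the only other positive critical point, so the segment $[c_{1,+},1]$ is mapped into $[1,+\infty)\subset\mathcal{A}^*(1)$; you instead locate the unique extra positive fixed point $x_+=\sqrt[3]{u_+}$ (your quadratic $(27+8\delta)u^2-7\delta u-\delta$ is correct) and prove the stronger statement $(x_+,\infty)\subset\mathcal{A}^*(1)$, then check $u_+<r_+\le 1$; both arguments are sound. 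For the second claim the paper has a two-line argument: $c_{1,-}$ is the global maximum of $T_{p_3,\delta}$ on the negative axis (from the limits at $-\infty$ and $0^-$), and the exact evaluation $T_{p_3,\delta}(-1/2)=1$, valid for every $\delta$, immediately gives $T_{p_3,\delta}(c_{1,-})\ge 1$. Your route is heavier but exact: the identity $T(c)=c-\delta\,p(c)p'(N(c))/p'(c)^2$ at free critical points is correct, the elimination $\delta=27s^2/(4s^2+19s+4)$ is correct, and so is the collapsed form $T(c)=c\,(19s^2+7s+1)/\bigl(s(4s^2+19s+4)\bigr)$. The step you left ``granted'' does close: expanding,
\[
(19s^2+7s+1)^3-s^2(4s^2+19s+4)^3=-(8s+1)^2\bigl(s^6+14s^5-40s^4+27s^3+4s^2-5s-1\bigr),
\]
and on $[-4/23,0)$ the sextic is strictly negative, since $s^3(s^3+14s^2-40s+27)<0$ there and $4s^2-5s-1<0$ because its negative root $(5-\sqrt{41})/8\approx-0.175$ lies below $-4/23\approx-0.174$; hence the degree-$8$ difference is nonnegative, with equality only at $s=-1/8$. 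Note that your equality case $s=-1/8$ (i.e.\ $\delta=1/4$, $c_{1,-}=-1/2$) is precisely the paper's magic point: the paper exploits $T_{p_3,\delta}(-1/2)=1$ once and for all, which is what makes its proof short, while your computation explains why the inequality is tight exactly there. Two polish items: when you assert $s=r_-\in[-4/23,0)$ for $\delta\in(0,1]$, justify it by observing that $\delta(s)=27s^2/(4s^2+19s+4)$ has derivative of the sign of $s(19s+8)$, hence is strictly decreasing on this range and maps $[-4/23,0)$ onto $(0,1]$; and in the final paragraph the logic should run through the critical points rather than through $\infty$: the roots are the only attracting cycles \emph{because} every attracting cycle must attract a critical orbit and all critical orbits are now accounted for (the repulsion of $\infty$ alone does not exclude other attractors).
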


\begin{proof}
	Fix $\delta\in(0,1]$. By the symmetry with respect to rotation by third roots of  unity (see Lemma~\ref{lem:symmetry}), it suffices to show  that $c_{1, +}\in \mathcal{A}^*(1)$ and  $T_{p_3,\delta}(c_{1, -})\in \mathcal{A}^*(1)$. Notice that both $c_{1, +}$ and $c_{1, -}$ are real since $\delta$ is real (see \eqref{eq:critcubic}). Therefore, it is enough to restrict to the real dynamics of $T_{p_3,\delta}$. Since $\delta\in(0,1]$ we have $c_{1, +}>0$. Since $\lim_{x\rightarrow -\infty} T_{p_3,\delta}(x)=-\infty$ and $\lim_{x\rightarrow 0^-} T_{p_3,\delta}(x)=-\infty$ for $\delta\in(0,1]$, it follows that $c_{1, -}$ is negative and is a local maximum (see Figure \ref{fig:cubicreal}). Moreover, the global maximum of $T_{p_3,\delta}$ among negative real numbers is $T_{p_3,\delta}(c_{1, -})$.
	
	\begin{figure}
		\centering
		\includegraphics[width=10cm]{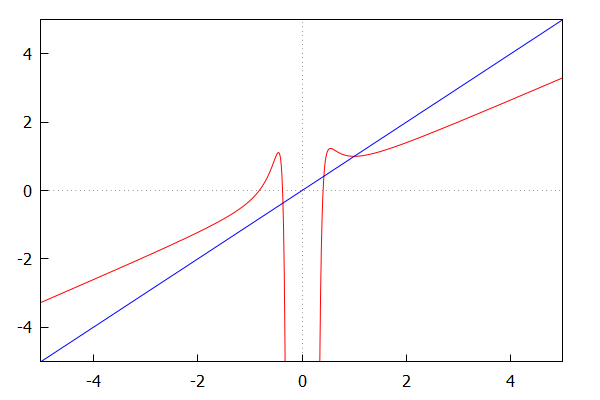}
		\caption{Graphic of $T_{p_3,\delta}(x)$ for $\delta=0.1$. We also draw the line $y=x$.}
		\label{fig:cubicreal}
	\end{figure}
	
	We first prove that $c_{1, +}\in \mathcal{A}^*(1)$. If $\delta=1$ (which corresponds to Traub's Method) then $c_{1, +}= 1$ (see \eqref{eq:critcubic}) and we are done. If $\delta\in (0,1)$ then the superattracting fixed point $x=1$ is a local minimum (see Figure \ref{fig:cubicreal}). Indeed, the second derivative of $T_{p_3,\delta}$ is given by
	$$T''_{p_3,\delta}(x)=\frac{(18+10\delta)x^{6}-20\delta x^3-8\delta}{9x^{10}}$$
	and $T''_{p_3,\delta}(1)=2-2\delta$, which is negative for $\delta\in (0,1)$. Since $\lim_{x\rightarrow 0^+} T_{p_3,\delta}(x)=-\infty$ and $c_{1, +}$ is the only real and positive critical point other than $x=1$ if $\delta\in (0,1)$, it follows that  $c_{1, +}\in(0,1)$ and $T_{p_3,\delta}$ has a local maximum at $c_{1, +}$. We can conclude that the segment $[c_{1, +},1]$ is mapped into the segment $[1,+\infty)$ under $T_{p_3,\delta}$. By the proof of Proposition~\ref{prop:unboundedN} we know that $[1,+\infty)\subset \mathcal{A}^*(1)$. Therefore, we have $c_{1, +}\in \mathcal{A}^*(1)$.
	
	To finish the proof we have to show that  $T_{p_3,\delta}(c_{1, -})\in \mathcal{A}^*(1)$. First of all observe that $c_{1, -}\notin \mathcal{A}^*(1)$. Otherwise, it would follow from the Schwartz Reflection Principle that $ \mathcal{A}^*(1)$ surrounds the pole $z=0$, which is impossible since  $\mathcal{A}^*(1)$ is simply connected (see Proposition~\ref{prop:simplyconN}). Since $T_{p_3,\delta}(-1/2)=1$ and  $c_{1, -}$ is the global maximum of $T_{p_3,\delta}$ among negative real numbers for $\delta\in(0,1]$, it follows that $T_{p_3,\delta}(c_{1, -})\in [1,+\infty)$. The result holds since $[1,+\infty)\subset \mathcal{A}^*(1)$ (see the proof of Proposition~\ref{prop:unboundedN}).

\end{proof}

\begin{figure}[hbt!]
	\centering
	\begin{tikzpicture}
		\begin{axis}[width=250pt, axis equal image, scale only axis,  enlargelimits=false, axis on top]
			\addplot graphics[xmin=-4,xmax=6,ymin=-5,ymax=5] {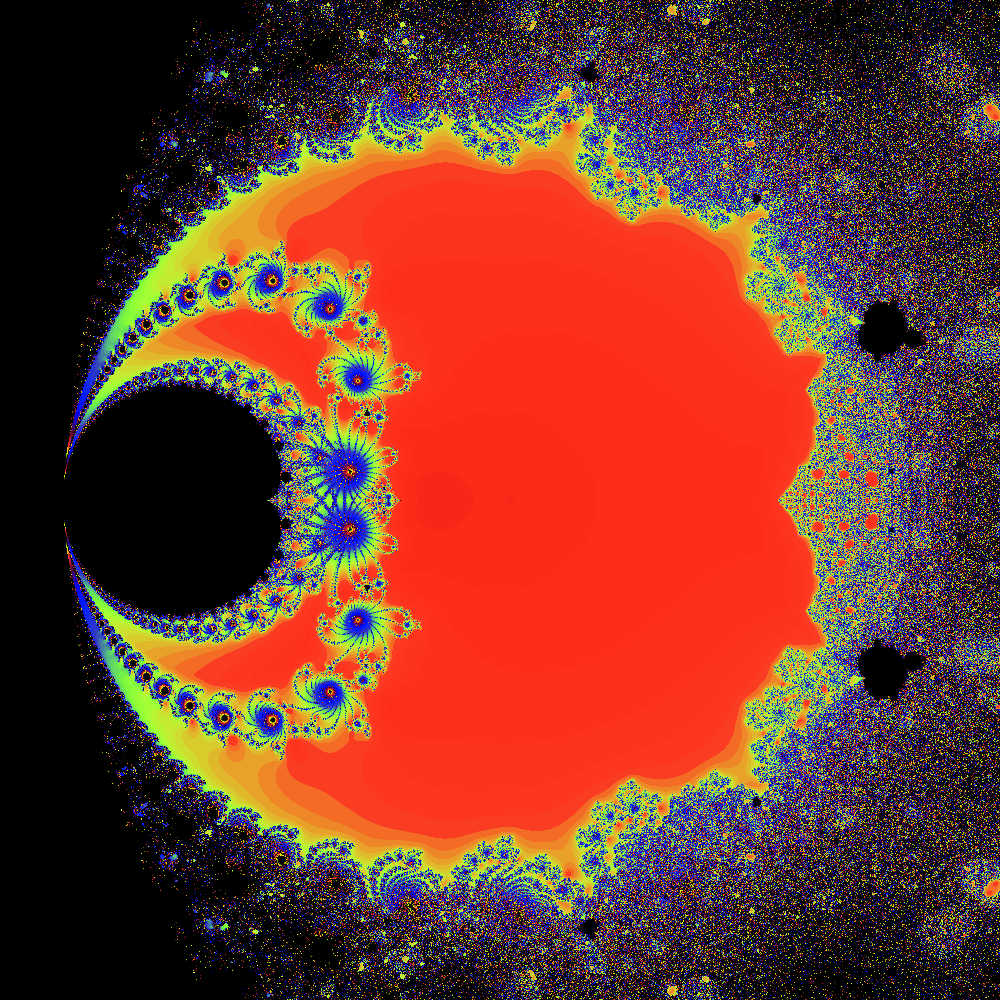};
		\end{axis}
	\end{tikzpicture}
	\put(-143,120){ $\times 0$}
	\put(-120,120){$\times 1$}
	\caption{Parameter plane of damped Traub's method applied to $p_3$. } 
	\label{fig:param_cubic}
\end{figure}
In Figure~\ref{fig:param_cubic} we can observe the parameter plane of damped Traub's method applied to $p_3$. Notice that there are 2 different critical orbits ({\it modulo} symmetries). This figure is produced by iterating the critical points $c_{1,+}$ and $c_{1,-}$. If any of the critical points does not converge to a root we plot the parameter in black. Otherwise we use a scaling of colours which indicates the speed of convergence to a root of the critical point which takes longer time to converge. The largest coloured region corresponds to the hyperbolic component which contains the segment of parameters $(0,1]$.
%
%
%
%
%
%
%
%

\section{Numerical evidence and conclusions}\label{sec:numconcl}

Up  to now we have studied the Conjecture of simple connectivity and unboundedness of the immediate basins of attraction of the roots of polynomials under Traub's method from an analytical point of view. In this section we present a further discussion together with numerical evidence to justify why we think that the Conjecture is true and how damped Traub's family can help us to prove the conjecture in full generality.

As we have mentioned before, for $\delta$ close enough  to $0$ we can formulate damped Traub's method $T_{p,\delta}$ as a singular perturbation of Newton's method $N_p$. However, compared to usual singular perturbations where the poles are added to superattracting fixed cycles (see for instance \cite{Trichotomy}), this singular perturbation is done by adding extra preimages of $\infty$ to the zeros of $p'(z)$, which are already poles of $N_p$. Since $z=\infty$ is a repelling fixed point for Newton's method, it follows that this singular perturbation is done over the Julia set, which makes it much more difficult to control.

On the other hand, if a point belongs to the basin of attraction of a root under $N_p$, then it also belongs to the basin of attraction of the root under $T_{p,\delta}$, if $|\delta|$ is small enough. The unboundedness of the basin of attraction of a root of the polynomial $p$ under $T_{p,\delta}$ can be inherited from that of  $N_p$  at least for $|\delta|$  small.  In fact, it can be proven that if $|\delta|$ is small enough, then the number of \textit{accesses} to $\infty$ from the basin  of a root $\alpha$ under  $T_{p,\delta}$ is  at least  equal to the number of accesses to $\infty$ under the $N_p$ (see Figure~\ref{fig:numericdinam} (a-b)). Therefore, if $T_{p,1}$ happens to have a bounded immediate basin of attraction associated to a root of $p$ (i.e., all accesses to infinity inside the basin of the root have been truncated) there should be a bifurcation, which requires the \textit{intervention} of critical points. However, all simulations seem to indicate that the critical points that appear after perturbation cannot be responsible for that.

\begin{figure}[p]
	\centering
	\subfigure[ $\delta =0$ (Newton's method)   ] 
	{\begin{tikzpicture}
			\begin{axis}[width=0.4\textwidth, axis equal image, scale only axis,  enlargelimits=false, axis on top]
				\addplot graphics[xmin=-2,xmax=2,ymin=-2,ymax=2] {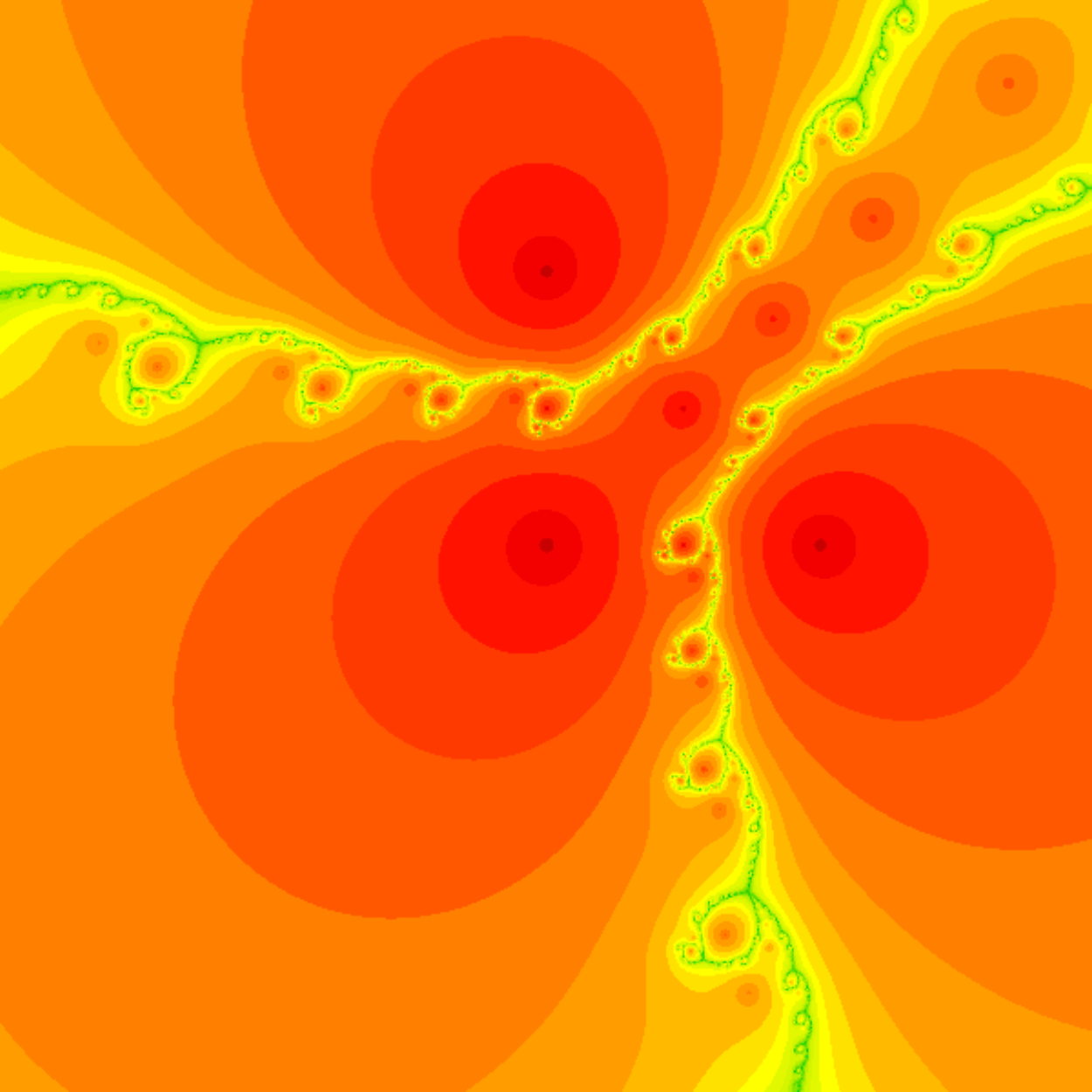};
			\end{axis}
	\end{tikzpicture}
	\put(-90.5,89){ $\times 0$}
		\put(-77.5,102){ $\times c$}
}
	\subfigure[ $\delta = 0.2$   ]{\begin{tikzpicture}
			\begin{axis}[width=0.4\textwidth, axis equal image, scale only axis,  enlargelimits=false, axis on top]
				\addplot graphics[xmin=-2,xmax=2,ymin=-2,ymax=2] {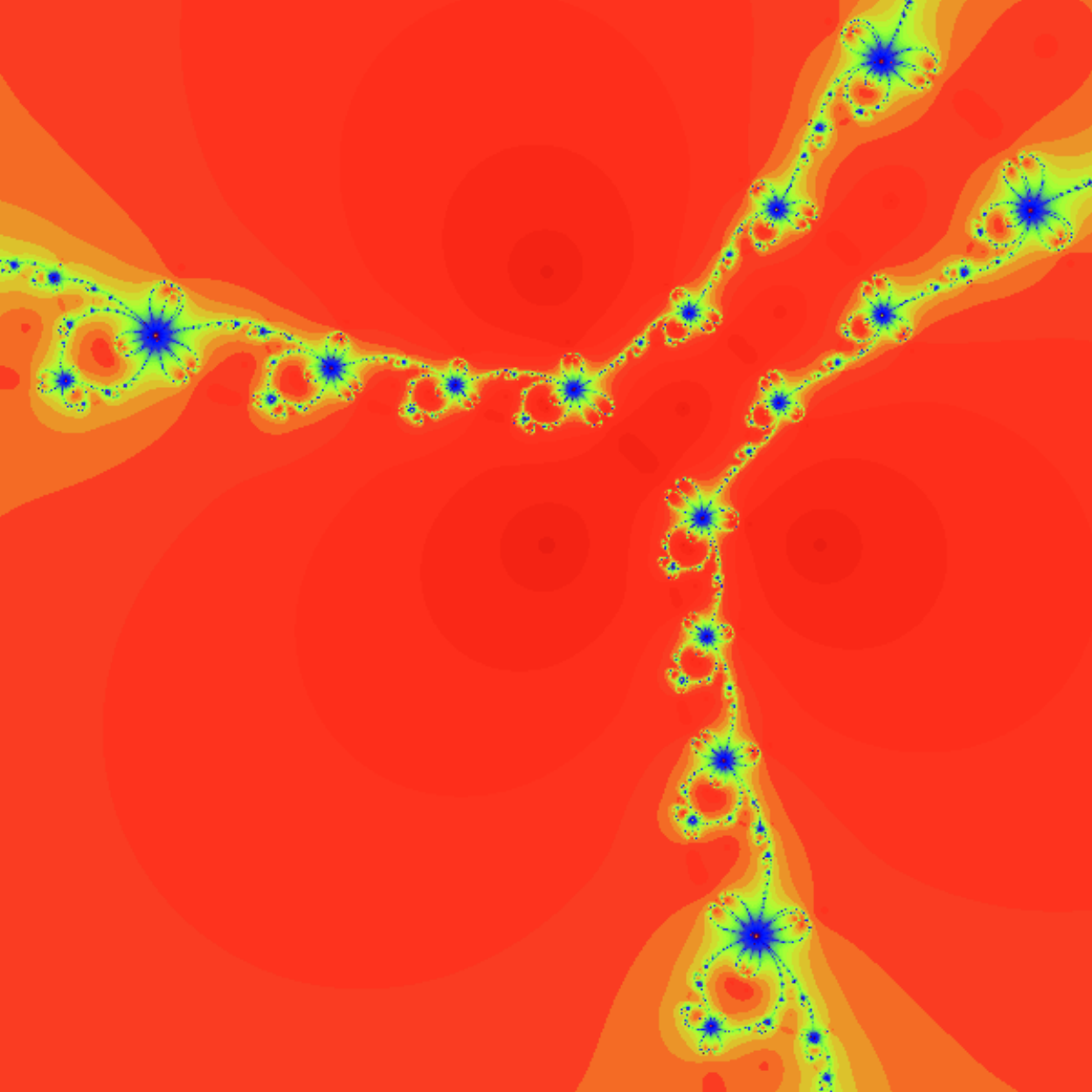};
			\end{axis}
	\end{tikzpicture}
\put(-90.5,89){ $\times 0$}
	\put(-77.5,102){ $\times c$}
}
	\subfigure[ $\delta = 0.40$   ] 
	{\begin{tikzpicture}
			\begin{axis}[width=0.4\textwidth, axis equal image, scale only axis,  enlargelimits=false, axis on top]
				\addplot graphics[xmin=-2,xmax=2,ymin=-2,ymax=2] {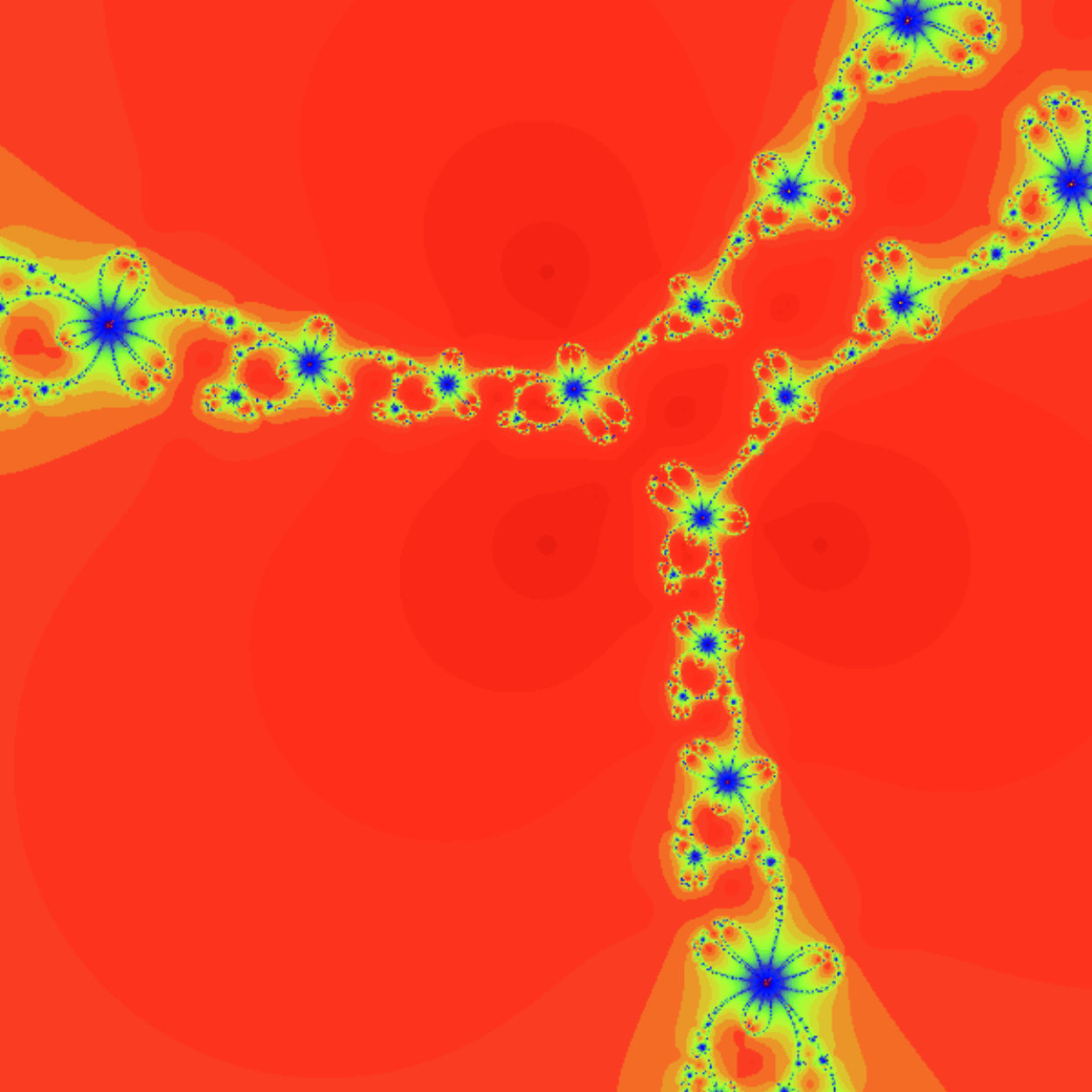};
			\end{axis}
	\end{tikzpicture}
\put(-90.5,89){ $\times 0$}
	\put(-77.5,102){ $\times c$}
}
	\subfigure[$\delta=0.6$   ]{\begin{tikzpicture}
			\begin{axis}[width=0.4\textwidth, axis equal image, scale only axis,  enlargelimits=false, axis on top]
				\addplot graphics[xmin=-2,xmax=2,ymin=-2,ymax=2] {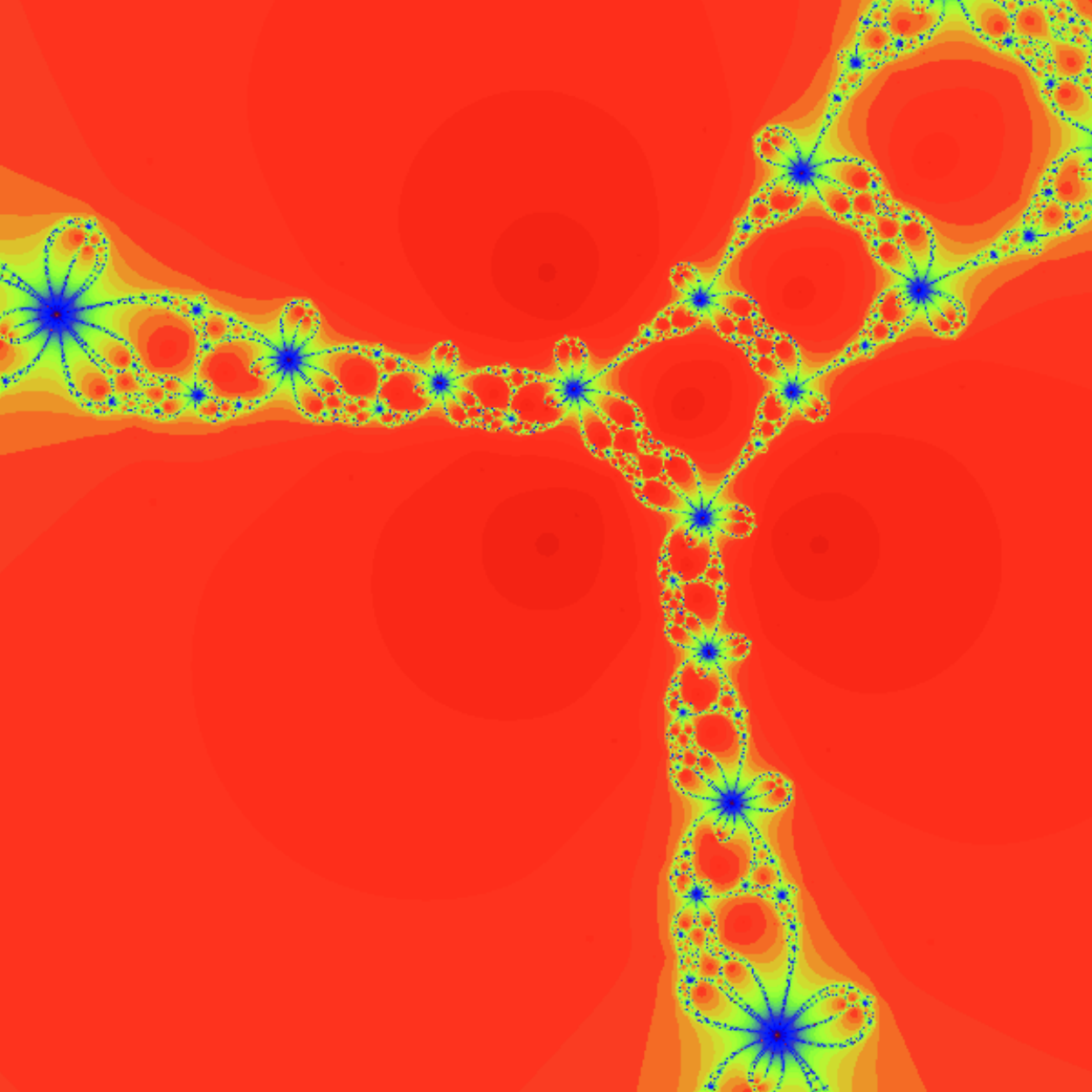};
			\end{axis}
			\end{tikzpicture}
	\put(-90.5,89){ $\times 0$}	
		\put(-77.5,102){ $\times c$}
	}
			\subfigure[$\delta=0.8$   ]{\begin{tikzpicture}
			\begin{axis}[width=0.4\textwidth, axis equal image, scale only axis,  enlargelimits=false, axis on top]
				\addplot graphics[xmin=-2,xmax=2,ymin=-2,ymax=2] {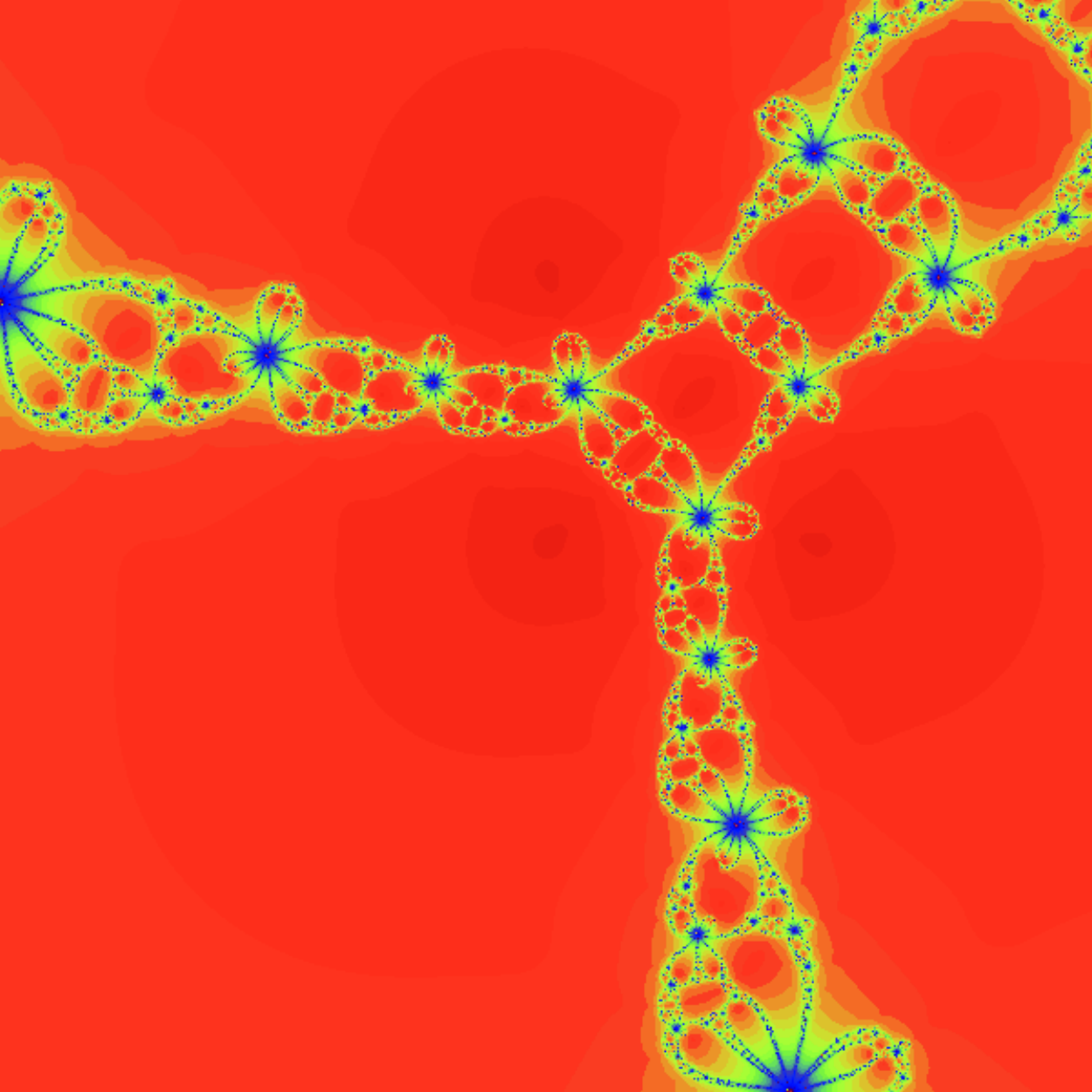};
			\end{axis}
			\end{tikzpicture}
	\put(-90.5,89){ $\times 0$}
		\put(-77.5,102){ $\times c$}	
	}
	\subfigure[ $\delta =1$ (Traub's method)   ]{\begin{tikzpicture}
			\begin{axis}[width=0.4\textwidth, axis equal image, scale only axis,  enlargelimits=false, axis on top]
				\addplot graphics[xmin=-2,xmax=2,ymin=-2,ymax=2] {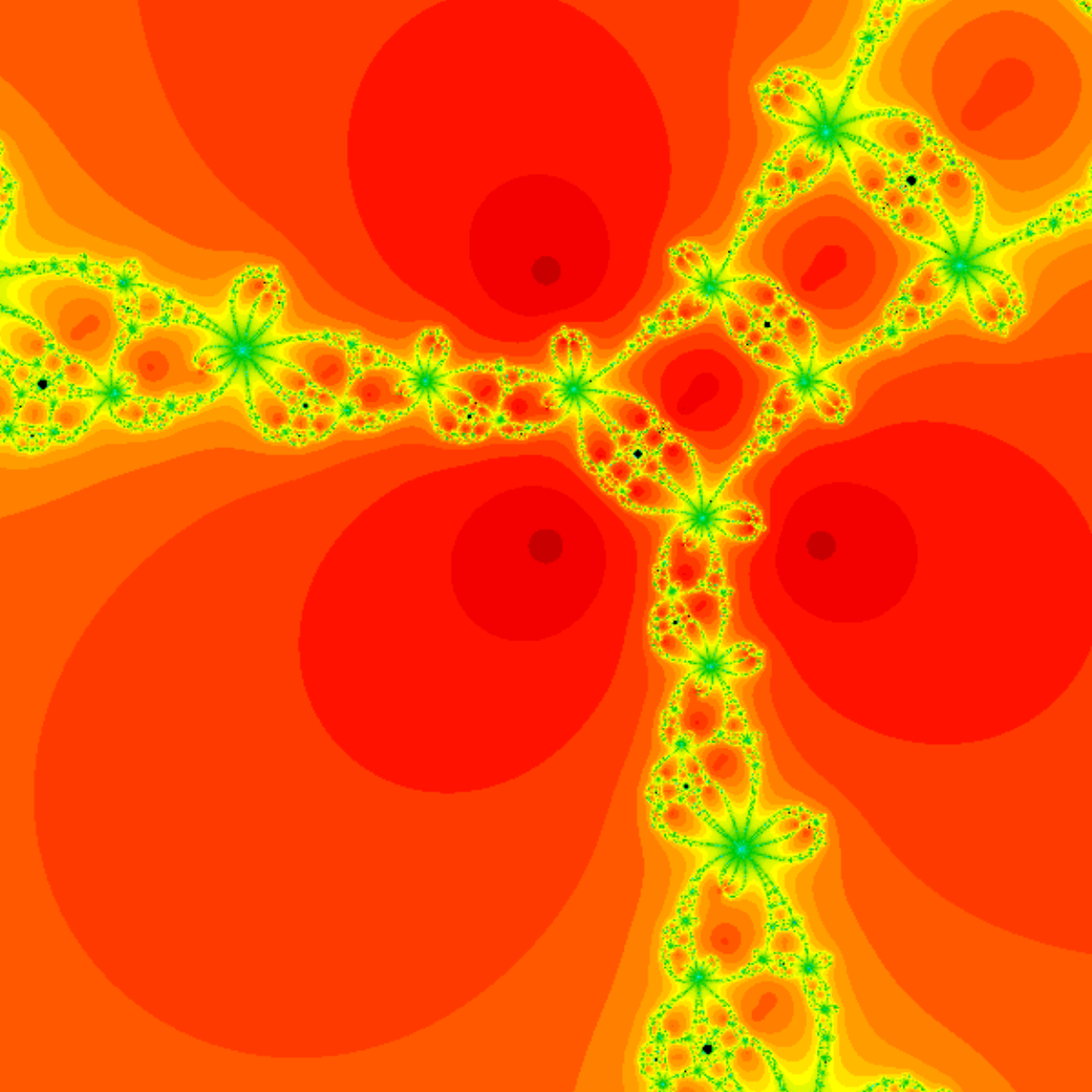};
			\end{axis}
	\end{tikzpicture}
\put(-90.5,89){ $\times 0$}
	\put(-77.5,102){ $\times c$}
}

	\caption{\small{Dynamical plane of damped Traub's method applied to $p(z)=z(z-1)(z-i)$.}}
	\label{fig:numericdinam}
	
\end{figure}

Numerical experiments  also indicate that when an access to $\infty$ is closed,  it is not due to the  critical points which appeared after perturbation, but rather due to a critical point which is a continuation of a free critical point of Newton's method (and already belongs to the immediate basin of attraction). In Figure~\ref{fig:numericdinam} we can see, step by step, how this process of closing one access to $\infty$ can happen. In Figure~\ref{fig:numericdinam}(a) we can observe the dynamical plane of Newton's method applied to $p(z)=z(z-1)(z-i)$. In this case the basins of attraction of $z=1$ and $z=i$ are unbounded and have a unique access to $\infty$ since they do not contain any free critical point. The basin of attraction of $z=0$ does contain a free critical point $c$ and has two accesses to $\infty$. Notice that, when applying Newton's method to polynomials, the number of accesses to $\infty$ from an immediate basin of attraction of a root is always equal to the number of free critical points that it contains plus 1 \cite{HowToNewton}. As $\delta$ grows from 0 to $1$ we can observe how the Julia set closes over the critical point $c$, erasing one of the accesses to $\infty$ (see Figure~\ref{fig:numericdinam} (f)).
Nevertheless, the \textit{skeleton} of the Julia set obtained for Traub's method ($\delta=1$) is still strongly related to the Julia set obtained for Newton's method. Following this idea, we believe that  if the immediate basin of attraction of a root has $d+1$ accesses to $\infty$ under Newton's map, then the $d$ free critical points it contains can be responsible for closing up to $d$ accesses to $\infty$ when the parameter moves from $\delta=0$ up to reaching Traub's method ($\delta=1$). However, this process would always leave an open access to $\infty$.

We have explained how damped Traub's method can help us understand the unboundedness of the immediate basins of attraction of Traub's method relating it to the dynamics of Newton's method. It is not obvious though that it can also be used to provide a better understanding of the simple connectivity. Indeed, there is a priori no obstruction in obtaining disconnected Julia sets after the singular perturbation for $|\delta|$ small, which could lead to multiply connected basins of attraction. An example of such behaviour could be Chebyshev-Halley family of root-finding algorithms (see \cite{gato}). These root-finding algorithm's depend on a parameter $\alpha$. When $\alpha$ converges to $\infty$ the operators converge to Newton's method. Despite that, if $|\alpha|$ is large enough the immediate basins of attraction of the roots may be disconnected (compare \cite{CCV20}).

\bibliographystyle{alpha}
\bibliography{biblio}

\end{document}